\documentclass[pdflatex,sn-mathphys-num]{sn-jnl}
\usepackage{graphicx}
\usepackage{booktabs}
\usepackage{algorithm}
\usepackage{algpseudocode}
\usepackage{amsmath,amssymb,amsfonts,mathtools}
\usepackage{tikz}
\usetikzlibrary{arrows.meta,positioning,fit,calc}

\theoremstyle{thmstyleone}
\newtheorem{theorem}{Theorem}[section]
\newtheorem{proposition}[theorem]{Proposition}
\newtheorem{lemma}[theorem]{Lemma}

\newtheorem{assumption}{Assumption}
\theoremstyle{thmstyletwo}
\newtheorem{remark}[theorem]{Remark}
\theoremstyle{thmstylethree}
\newtheorem{definition}[theorem]{Definition}

\newcommand{\dif}{\mathrm{d}}
\newcommand{\E}{\mathbb{E}}
\newcommand{\R}{\mathbb{R}}
\newcommand{\Prob}{\mathbb{P}}
\newcommand{\KL}[2]{D_{\mathrm{KL}}\!\left(#1 \,\middle\|\, #2\right)}
\newcommand{\ssv}{\lambda_{W}}

\makeatletter
\providecommand*{\toclevel@algorithm}{3}
\makeatother

\begin{document}

\title[Entropic SDDP with HJB cross-certification]{InFlow: entropic stochastic
dual dynamic programming with HJB cross-certification for seasonal energy storage}

\author*[1,2]{\fnm{Steeven B.} \sur{Affognon}}
\email{steevenbelvinos@gmail.com}

\author[1]{\fnm{Babacar M.} \sur{Ndiaye}}
\email{babacarm.ndiaye@ucad.edu.sn}

\author[3,4]{\fnm{Cheikh M.F.} \sur{Kebe}}
\email{cheikh.kebe@ucad.edu.sn}

\author[1]{\fnm{Pierre} \sur{Mendy}}
\email{pierre.mendy@ucad.edu.sn}

\affil*[1]{\orgdiv{Laboratory Mathematics of Decision and Numerical Analysis},
\orgname{Cheikh Anta Diop University of Dakar},
\orgaddress{\city{Dakar}, \country{Senegal}}}

\affil[2]{\orgdiv{Department of Mathematics},
\orgname{University of Nairobi},
\orgaddress{\city{Nairobi}, \country{Kenya}}}

\affil[3]{\orgdiv{Laboratoire Eau, Energie, Environnement et proc\'ed\'es Industriels (LE3PI)},
\orgname{Universit\'e Cheikh Anta Diop de Dakar (UCAD)},
\orgaddress{\city{Dakar}, \country{Senegal}}}

\affil[4]{\orgname{Centre de Tests des Syst\`emes Solaires (CT2S)},
\orgaddress{\city{Dakar}, \country{Senegal}}}

\abstract{Seasonal storage optimization requires both scalable stochastic
programming and independent numerical checks when inflow dynamics are strongly
periodic and imperfectly specified. We develop \emph{InFlow}, a
verification-oriented architecture coupling periodic stochastic dual dynamic
programming (SDDP), entropic transition robustness, control-space Bellman
annealing, and an independently discretized Hamilton--Jacobi--Bellman (HJB)
benchmark. The transition operator is interpreted exactly as a KL-penalized
worst-case expectation, while a fixed-radius KL ambiguity set is related to it
through Lagrange duality. We prove a uniform soft-Bellman approximation bound
and validity of Gibbs-tilted SDDP cuts. The benchmark uses a normalized
seasonal Cox--Ingersoll--Ross inflow whose usual uniform Feller positivity
guarantee fails in the driest weeks, requiring explicit treatment of the
degenerate boundary. HJB storage gradients and HiGHS storage-balance duals
produce seasonal water values with correlation $0.995$; an exact-cut-only
replication gives $0.996$. Mesh refinement from $61^2$ to $81^2$ HJB states
changes the annual-mean water value by only $0.6\%$ and the peak by less than
$0.1\%$, although the absolute HJB value level converges more slowly. In
out-of-sample simulation with $3{,}000$ independent trajectories and common
random numbers, $\gamma=5$ incurs a $4.6\%$ nominal mean-cost premium while
reducing CVaR$_{90}$ by $11.2\%$; under a dry-season stress with mean inflow
$40\%$ below nominal, the mean difference is statistically indistinguishable
from zero and CVaR$_{90}$ falls by $11.5\%$. A twelve-point $\gamma$-sweep
confirms monotonicity of the risk-adjusted value, a paired timing experiment
bounds the information advantage of the weekly decision convention, and a
two-reservoir cascade reproduces both marginal water values against a grid
oracle with correlation $0.999$ and the predicted upstream/downstream ratio of
two. The cascade does not support a higher-dimensional acceleration claim,
so entropic guidance is kept separate from exact certification. InFlow is a
transparent methodological benchmark, not a calibrated operational reservoir
model.}

\keywords{stochastic dual dynamic programming, relative entropy,
distributionally robust optimization, seasonal energy storage,
Hamilton--Jacobi--Bellman equation, water value}

\pacs[MSC 2020]{90C15, 90C39, 49L20, 60H10, 91B70}

\maketitle

\section{Introduction}
\label{sec:intro}

Seasonal storage problems--hydro reservoirs, gas storage, inventory
carried across demand cycles--share a mathematical core: a controlled
storage state with hard capacity bounds, an exogenous stochastic driver
with pronounced seasonality, and a convex operating cost whose minimization
couples every decision to every future season through a single scalar
shadow price. In hydropower scheduling this price is the \emph{water
value}, the marginal value of stored water; resolved over the annual
cycle we call it the \emph{seasonal storage value} (SSV). Utilities
dispatch against tabulated water values produced by stochastic dual
dynamic programming \citep[SDDP;][]{pereira1991}, while the continuous-time
optimal control literature characterizes the same object as the storage
gradient of a viscosity solution to a Hamilton--Jacobi--Bellman (HJB)
equation \citep{crandall1992,fleming2006}. A recent systematic review of
$277$ water-value studies emphasizes uncertainty treatment, physical fidelity,
and transparent reporting as central methodological trade-offs in hydropower
valuation \citep{pavicevic2026water}. The HJB and SDDP communities compute the
same marginal storage object with different machinery, and this paper exploits
that redundancy deliberately: the low-dimensional PDE solution is used as an
independent numerical oracle for the scalable cutting-plane solver.

Our starting point is a reservoir model whose inflow follows a
Cox--Ingersoll--Ross (CIR) square-root diffusion \citep{cir1985} with
time-varying seasonal mean $\theta(t)$. A constant-parameter CIR process
stays strictly positive when the Feller condition $2\kappa\theta \ge
\sigma_0^2$ holds \citep{feller1951}; with a seasonal $\theta(t)$ the
condition must hold \emph{for every} $t$, and in the calibration considered
here it fails in the driest weeks of the year. This is not a technical
nuisance but the central modeling fact: the weeks in which the diffusion
has a degenerate boundary at the origin--so boundary contact cannot be
excluded by the usual uniform Feller condition--are exactly the weeks of
peak demand and peak storage stress. A solver for this problem should
therefore (i) handle the degenerate boundary correctly rather than assume
it away, and (ii) hedge against the model error that the failed condition
signals. Requirement (i) is addressed by upwind treatment of the $q=0$
boundary in the HJB scheme and by positivity-preserving simulation schemes
\citep{alfonsi2005,lord2010}. Requirement (ii) is where relative entropy
enters.

The contribution of the paper is an architecture, which we call
\textbf{InFlow}, in which one variational object--the relative-entropy
(free-energy) functional--appears twice, in dual roles:
on the control space it produces an \emph{entropically smoothed Bellman
operator} (a soft-min) whose temperature $\eta$ is annealed to zero,
giving a differentiable homotopy from a well-conditioned regularized
problem to the exact one, in the spirit of regularized Markov decision
processes \citep{geist2019} and maximum-entropy control
\citep{todorov2009,kappen2005}; on the probability space, the
Donsker--Varadhan variational formula \citep{dupuis1997} identifies the
\emph{entropic risk measure} exactly with a KL-penalized worst-case
expectation. A fixed-radius KL ambiguity set is connected to the same
functional by Lagrange duality, with the multiplier selected to satisfy the
radius when the constraint binds. Its recursive form is
time-consistent; under the law-invariant and regularity conditions studied by
\citet{kupper2009}, the exponential/entropic class is essentially singled out,
while \citet{ruszczynski2010} provides the dynamic-programming framework for
risk-averse Markov control. Entropic risk has already been incorporated in
SDDP and tested on large-scale hydro-thermal scheduling
\citep{dowson2025convex}; hence the novelty claimed here is \emph{not}
entropic SDDP by itself. It is the combination of transition-space robustness
with control-space annealing, explicit treatment of the seasonal degenerate
boundary, and independent HJB/LP-dual cross-certification. The construction
also connects to risk-averse SDDP
\citep{shapiro2011,philpott2013,shapiro2013risk} and
Hansen--Sargent robustness \citep{hansen2008}. Both entropic roles reduce, in
implementation, to a log-sum-exp. To our knowledge, InFlow is among the first
hydropower-storage frameworks to cross-certify the same seasonal marginal
water value independently as a continuous-time HJB storage gradient and as the
storage-balance dual of a periodic SDDP formulation, while explicitly treating
seasonal diffusion degeneracy and KL-penalized transition uncertainty.

Concretely, the paper makes five contributions. First, we distinguish the
two mathematically different uses of relative entropy and prove the two
results on which the implementation rests: a sandwich/contraction bound for
the soft Bellman operator (Proposition~\ref{prop:soft}) and validity of
Gibbs-tilted cutting planes for the nested entropic continuation value
(Lemma~\ref{lem:cut}). Second, we formulate a steady-seasonal Markov-chain
SDDP scheme \citep{shapiro2020} whose stage problems are solved by HiGHS
\citep{huangfu2018}, with water value read directly from the storage-balance
LP dual. Third, we make the certification discipline explicit: entropic
annealing may guide state exploration, whereas only exact LP-generated cuts
enter a certified lower-bound store. Fourth, we cross-check the resulting
water values against an independent state-constrained HJB discretization,
including mesh/truncation sensitivity and an exact-cut-only replication.
Fifth, we quantify the effect of KL-penalized robustness out of sample,
across a twelve-point robustness sweep, and test the timing convention and a
two-reservoir cascade. The resulting contribution is therefore not a new LP
solver or entropic risk measure in isolation, but a verifiable architecture
linking robust SDDP, water-value duals, and an independent continuous-time
benchmark.

The synthetic design is a deliberate first step in a two-stage research
programme. By matching the continuous and discrete formulations state for
state, the present paper isolates numerical error and certifies the solver
before site-specific complexity is introduced. The empirical hydrological
layer of the MOSSHOOS programme has separately characterized the multiscale
seasonality, annual timing, cross-site coherence, and nonstationarity of
nearly six decades of daily inflows at Bafing Makana, F\'elou, and Gouina
\citep{affognon2026wavelet}. That study provides the data-driven inflow
structure and a reproducible basis for stochastic scenario generation;
operational calibration of InFlow to the Senegal river basin remains a
subsequent stage, beginning with Manantali and later representing the OMVS
cascade, operating rules, travel times, head effects, and physical and
monetary units. The two studies therefore have distinct claims: the companion
hydrological study establishes the empirical seasonal structure, whereas
InFlow provides the methodological and verification layer needed before
basin-specific optimization. No Senegal river operating conclusion is drawn
from the normalized results reported here.

We position InFlow relative to general-purpose optimizers explicitly:
it is not a rival LP kernel to HiGHS or Gurobi but a value-function layer
above one. The deterministic equivalent of the problems treated here grows
exponentially in the scenario tree and is not practically solvable by any
monolithic LP/MIP solver; SDDP-type methods never form it. The comparison
class is therefore SDDP implementations and HJB solvers, and the novelty
is the entropic regularization/robustification of the former certified
against the latter.

The remainder follows the standard scientific sequence requested for the
final manuscript. Section~\ref{sec:methods} gives the model, HJB benchmark,
entropic formulation, periodic SDDP algorithm, certification discipline, and
numerical protocol. Section~\ref{sec:results} reports cross-certification,
robustness, sensitivity, timing, and cascade results. Section~\ref{sec:discussion}
interprets the findings and states the limitations and operational scope.
Section~\ref{sec:conclusion} concludes. Proofs and discretization details are
collected in the appendices.

\begin{figure}[t]
\centering
\resizebox{\linewidth}{!}{%
\begin{tikzpicture}[
  font=\sffamily\scriptsize,
  node distance=5mm and 7mm,
  flow/.style={-{Latex[length=2mm]}, line width=.55pt, draw=black!65},
  data/.style={rounded corners=1.5mm, draw=blue!55!black, fill=blue!7,
    line width=.55pt, align=center, inner sep=4pt, text width=34mm},
  entropy/.style={rounded corners=1.5mm, draw=teal!60!black, fill=teal!7,
    line width=.55pt, align=center, inner sep=4pt, text width=34mm},
  solver/.style={rounded corners=1.5mm, draw=orange!70!black, fill=orange!8,
    line width=.65pt, align=center, inner sep=4pt, text width=36mm},
  check/.style={rounded corners=1.5mm, draw=violet!60!black, fill=violet!7,
    line width=.55pt, align=center, inner sep=4pt, text width=34mm},
  outcome/.style={rounded corners=1.5mm, draw=black!60, fill=black!4,
    line width=.55pt, align=center, inner sep=4pt, text width=35mm},
  group/.style={rounded corners=2mm, draw=black!25, dashed, inner sep=4mm}
]

\node[data] (physical) {\textbf{Seasonal storage system}\\
CIR inflow $Q_t$, storage $S_t$, demand $D_t$};
\node[data, right=of physical] (chain) {\textbf{Discrete stochastic model}\\
weekly conditional-mean Markov chain};

\node[entropy, below left=8mm and -5mm of physical] (control)
{\textbf{Control-space entropy}\\
soft Bellman operator; $\eta\downarrow0$};
\node[entropy, below right=8mm and -5mm of chain] (transition)
{\textbf{Transition-space entropy}\\
nested entropic risk; parameter $\gamma$};

\node[solver, below=10mm of $(control)!0.5!(transition)$] (phasea)
{\textbf{Phase A: smooth global initialization}\\
annealed value iteration and conservative seed cuts};
\node[solver, below=of phasea] (phaseb)
{\textbf{Phase B: exact periodic SDDP}\\
HiGHS stage LPs and entropically tilted cuts};
\node[outcome, below=of phaseb] (dual)
{\textbf{Scalable storage value}\\
$\lambda_W^{\mathrm{SDDP}}=-\mu$ from the LP balance dual};

\node[check, left=12mm of phaseb] (hjb)
{\textbf{Independent HJB benchmark}\\
degenerate-boundary finite differences};
\node[outcome, below=of hjb] (grad)
{\textbf{Continuous storage value}\\
$\lambda_W^{\mathrm{HJB}}=-\partial_sV$};

\node[check, below=9mm of $(grad)!0.5!(dual)$] (cert)
{\textbf{Cross-certification}\\
gradient--dual agreement and bias detection};
\node[outcome, right=of cert] (oos)
{\textbf{Out-of-sample assessment}\\
nominal/stressed mean cost and CVaR$_{90}$};

\draw[flow] (physical) -- (chain);
\draw[flow] (physical.south) |- (control.north);
\draw[flow] (chain.south) |- (transition.north);
\draw[flow] (control) -- (phasea);
\draw[flow] (transition) -- (phasea);
\draw[flow] (phasea) -- (phaseb);
\draw[flow] (phaseb) -- (dual);
\draw[flow] (physical.west) -- ++(-8mm,0) |- (hjb.west);
\draw[flow] (hjb) -- (grad);
\draw[flow] (grad) -- (cert);
\draw[flow] (dual) -- (cert);
\draw[flow] (cert) -- (oos);
\draw[flow] (phaseb.east) -| (oos.north);

\node[group, fit=(control)(transition)(phasea)] {};

\end{tikzpicture}%
}
\caption{InFlow architecture. The seasonal physical model is discretized
into a conditional-mean inflow chain. Relative entropy regularizes the
control minimization and robustifies transition aggregation before exact
periodic SDDP refinement. The resulting LP-dual water value is checked
against an independently discretized HJB storage gradient, after which
policies are evaluated under nominal and misspecified inflow dynamics.}
\label{fig:architecture}
\end{figure}

\section{Methods}
\label{sec:methods}

\subsection{Seasonal storage model and water value}
\label{sec:model}
\label{sec:hjb}

Time $t$ is measured in years. The state is $(S_t,Q_t)$, storage and
inflow. Controls are the release rate $u_t \in [0,U_{\max}]$ and the spill
rate $w_t \ge 0$. The dynamics are
\begin{align}
\dif S_t &= (Q_t - u_t - w_t)\,\dif t, \qquad 0 \le S_t \le S_{\max},
\label{eq:stor}\\
\dif Q_t &= \kappa\left[\theta(t) - Q_t\right]\dif t
 + \sigma_0 \sqrt{Q_t}\,\dif W_t, \qquad Q_t \ge 0,
\label{eq:cir}
\end{align}
with $\theta(t) = \bar\theta\left(1 + a\cos 2\pi(t - t_w)\right)$ the
seasonal inflow mean and $W$ a standard Brownian motion. Demand
$D(t) = \bar D\left(1 + b\cos 2\pi(t - t_d)\right)$ peaks
counter-seasonally to inflow ($t_d \ne t_w$). The running cost is the
thermal cost of the residual load plus a small spill penalty,
\begin{equation}
\ell(t,u,w) \;=\; c_1\,x + \tfrac{c_2}{2}\,x^2 \;+\; \varepsilon_w w,
\qquad x = \left(D(t)-u\right)^+,
\label{eq:cost}
\end{equation}
and the objective is the discounted infinite-horizon (equivalently,
periodic) cost
\begin{equation}
V(t,s,q) \;=\; \inf_{(u,w)\in\mathcal U(s,q)}
\E_{t,s,q}\!\left[\int_t^{\infty} e^{-\rho(\tau-t)}\,
\ell(\tau,u_\tau,w_\tau)\,\dif\tau\right],
\label{eq:objective}
\end{equation}
over admissible controls adapted to the filtration of $W$ and respecting
the state constraints in \eqref{eq:stor}; spill is the control induced at
the upper storage boundary. Because all data are $1$-periodic in $t$ and
$\rho>0$, $V$ is the unique $1$-periodic fixed point of the one-year
dynamic-programming map, and we work with this \emph{steady-seasonal}
value function throughout; it replaces any ad hoc terminal condition
$g(S_T)$ and makes the SSV a property of the cycle rather than of a
truncation point.

Throughout, the \emph{water value} is
\begin{equation}
\ssv(t,s,q) \;=\; -\,\partial_s V(t,s,q) \;\ge\; 0,
\end{equation}
and the \emph{seasonal storage value} is $\ssv$ viewed as a function of
the phase $t$ of the annual cycle.

\begin{assumption}[standing]\label{ass:standing}
$\rho>0$; $\theta,D$ are continuous and $1$-periodic; $c_1,c_2>0$,
$\varepsilon_w\ge 0$; $U_{\max} > \sup_t D(t)$; $S_{\max}>0$.
\end{assumption}

Under Assumption~\ref{ass:standing} the stage costs are nonnegative and
jointly convex in $(s,u,w)$, the dynamics \eqref{eq:stor} are linear in
$(s,u,w)$, and $s \mapsto V(t,s,q)$ is convex and nonincreasing for each
$(t,q)$; these are the structural facts the cutting-plane method uses.

\subsection{HJB equation and release threshold}

Dynamic programming applied to
\eqref{eq:stor}--\eqref{eq:objective} yields, on
$(0,1)\times(0,S_{\max})\times(0,\infty)$ with periodicity in $t$,
\begin{equation}
\rho V - \partial_t V \;=\;
\min_{(u,w)\in\mathcal U(s,q)}
\Big\{ \ell(t,u,w) + (q-u-w)\,\partial_s V \Big\}
+ \kappa\left[\theta(t)-q\right]\partial_q V
+ \tfrac12\sigma_0^2\, q\, \partial_{qq} V .
\label{eq:hjb}
\end{equation}
The value function is in general nonsmooth at the storage bounds, and
\eqref{eq:hjb} is understood in the constrained viscosity sense
\citep{crandall1992,fleming2006}; the state constraints enter as boundary
inequalities (release is limited by availability at $s=0$; spill is
induced at $s=S_{\max}$).

The inner minimization is one-dimensional and separable. Writing
$m(t,u) = -\partial_u \ell = c_1 + c_2\left(D(t)-u\right)$ for $u<D(t)$
(and $0$ beyond) for the marginal avoided system cost of release, the
Karush--Kuhn--Tucker condition for \eqref{eq:hjb} gives the threshold
policy
\begin{equation}
u^*(t,s,q) \;=\;
\begin{cases}
0, & m(t,0) \le \ssv(t,s,q), \\[2pt]
D(t) - \dfrac{\ssv(t,s,q) - c_1}{c_2}, & c_1 < \ssv(t,s,q) < m(t,0),\\[6pt]
D(t), & \ssv(t,s,q) \le c_1,
\end{cases}
\label{eq:threshold}
\end{equation}
clipped to $[0,U_{\max}]$: water is released exactly while its marginal
avoided cost exceeds the water value. Equation~\eqref{eq:threshold} is
the continuous-time twin of the discrete rule ``dispatch until marginal
cost falls to the active SDDP cut slope''; the identity of these two
statements is what Section~\ref{sec:results} verifies numerically.

\subsection{Failure of the Feller condition and its consequences}
\label{sec:feller}

For constant $\theta$, the CIR process \eqref{eq:cir} satisfies
$\Prob(Q_t>0 \;\forall t)=1$ if and only if the Feller ratio
$F = 2\kappa\theta/\sigma_0^2 \ge 1$ \citep{feller1951,cir1985}. With
time-varying $\theta(t)$ no global claim follows unless
$2\kappa\theta(t)\ge\sigma_0^2$ for \emph{every} $t$. In our calibration
($\kappa=8$, $\sigma_0=2$, $\bar\theta=1$, $a=0.8$) the ratio is
$F(t)=4\theta(t)\in[0.8,7.2]$: it falls below one for the five driest
weeks of the year. Three consequences are drawn, and all three are
implemented rather than assumed:

first, the usual uniform condition that excludes boundary contact is
unavailable in the dry season, so the HJB scheme must handle rather than
silently exclude the origin;
the diffusion coefficient $\tfrac12\sigma_0^2 q$ vanishes at $q=0$ while
the drift $\kappa\theta(t)>0$ points inward, and a one-sided upwind
discretization of the drift term is the correct degenerate-boundary
treatment. Second, path simulation must preserve nonnegativity without
relying on Feller: we use the drift-implicit square-root scheme of
\citet{alfonsi2005} whenever its well-posedness guard
$q_k + (\kappa\theta - \tfrac12\sigma_0^2)h \ge 0$ holds, and the
full-truncation Euler scheme of \citet{lord2010}, which converges without
any Feller condition, for the (dry-season, near-zero) steps where it
fails. Third--and this motivates Section~\ref{sec:entropy}--the weeks
in which the transition density degenerates (the noncentral-$\chi^2$
degrees-of-freedom parameter $d = 4\kappa\theta/\sigma_0^2 = 2F$ drops
below $2$, so the density is unbounded at the origin) are the weeks in
which any finite calibration record pins the law of $Q$ least reliably.
A solver that trusts the nominal law exactly there is fragile by
construction. We therefore hedge over a KL neighborhood of the nominal
transition law, sized by a single parameter, rather than over a
parametric family chosen ex ante.

\subsection{Relative entropy and robust dynamic programming}
\label{sec:entropy}

This section is the mathematical core of the paper. Both uses of relative
entropy rest on the same variational identity. For probability measures
$Q \ll P$ on a common space, the relative entropy is
$\KL{Q}{P} = \int \log\frac{\dif Q}{\dif P}\,\dif Q \ge 0$, and for any
bounded measurable $X$ the Donsker--Varadhan (Gibbs) variational formula
holds \citep{dupuis1997}:
\begin{equation}
\frac1\gamma \log \E_P\!\left[e^{\gamma X}\right]
\;=\;
\sup_{Q \ll P}\;\Big\{ \E_Q[X] \;-\; \tfrac1\gamma \KL{Q}{P} \Big\},
\qquad \gamma > 0,
\label{eq:dv}
\end{equation}
with the supremum attained by the Gibbs tilt
$\dif Q^\star/\dif P \propto e^{\gamma X}$. Read left to right,
\eqref{eq:dv} says the log-sum-exp is a KL-penalized worst case; read
right to left, it says every KL-regularized optimization has a
closed-form value. The solver uses \eqref{eq:dv} in both directions: over
the \emph{control} measure (Section~\ref{sec:soft}) and over the
\emph{transition} measure (Section~\ref{sec:robust}).

\subsubsection{Entropic smoothing of the Bellman operator}
\label{sec:soft}

Discretize time into stages of length $\Delta = 1/52$ with per-stage
discount $\delta = e^{-\rho\Delta}$, and let the exogenous driver take
values in a finite set indexed by $j$ (Section~\ref{sec:algo} constructs
this chain). The exact Bellman operator at stage $t$, node $j$, storage
$s$ is
\begin{equation}
(\mathcal T_{t,j} V)(s) \;=\;
\min_{u \in \mathcal U_{t,j}(s)}
\Big\{ c_{t,j}(s,u) + \delta\, \mathcal R_{t,j}\big(V_{t+1,\cdot}(s')\big) \Big\},
\qquad s' = s + (q_{t,j} - u - w)\Delta,
\label{eq:bellman}
\end{equation}
where $\mathcal R$ is the (for now, risk-neutral) aggregation over the next
node, $\mathcal R_{t,j}(v) = \sum_{j'} P_t[j,j']\,v_{j'}$. Fix a reference
probability measure $\pi_0$ on the control set (uniform on a grid of
$N_u$ admissible releases in our implementation). The
\emph{entropic Bellman operator} at temperature $\eta>0$ relaxes the
minimum over controls to a minimum over randomized controls with a KL
penalty against $\pi_0$:
\begin{align}
(\mathcal T^{\eta}_{t,j} V)(s)
&\;=\; \min_{\pi \in \mathcal P(\mathcal U)}
\Big\{ \E_{u\sim\pi}\big[G(s,u)\big] + \eta \KL{\pi}{\pi_0} \Big\}
\label{eq:softdef}\\
&\;=\; -\,\eta \log \E_{u \sim \pi_0}\!\left[
\exp\!\big(-G(s,u)/\eta\big)\right],
\label{eq:softlse}\\[-1mm]
&\text{where}\qquad
G(s,u) := c_{t,j}(s,u)
+ \delta\,\mathcal R_{t,j}\!\big(V_{t+1,\cdot}(s')\big).\notag
\end{align}
where \eqref{eq:softlse} follows from \eqref{eq:dv} applied with
$\gamma = 1/\eta$ and $X = -G$, and the minimizing policy is the Gibbs
measure $\pi^\star(\dif u) \propto \pi_0(\dif u)\,e^{-G(s,u)/\eta}$. This
is the standard regularized-MDP construction \citep{geist2019}, put here
to a specific numerical purpose: \eqref{eq:softlse} is $C^\infty$ in $s$
wherever $G$ is and collapses to
\eqref{eq:bellman} as $\eta \downarrow 0$. The properties we invoke are
elementary but worth recording precisely.

\begin{proposition}[sandwich, monotonicity, contraction]
\label{prop:soft}
Let $\pi_0$ be uniform on a finite control grid of $N_u$ points, and let
$V, V'$ be bounded. Then, pointwise,
\begin{enumerate}
\item[(i)] $\displaystyle \mathcal T V \;\le\; \mathcal T^{\eta} V
\;\le\; \mathcal T V + \eta \log N_u$;
\item[(ii)] $V \le V' \implies \mathcal T^{\eta} V \le \mathcal T^{\eta} V'$,
and $\mathcal T^\eta$ is a $\delta$-contraction in the sup norm;
\item[(iii)] the fixed points satisfy
$\displaystyle 0 \;\le\; V^{\eta} - V^{*} \;\le\; \frac{\eta \log N_u}{1-\delta}$,
where $V^*$ and $V^\eta$ are the (periodic) fixed points of $\mathcal T$
and $\mathcal T^\eta$.
\end{enumerate}
\end{proposition}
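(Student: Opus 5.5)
Throughout I work with the log-sum-exp form \eqref{eq:softlse}. Since $\pi_0$ is uniform on the grid $\{u_1,\dots,u_{N_u}\}$, it reads
\[
(\mathcal T^\eta V)(s) = -\eta\log\Big(\tfrac1{N_u}\textstyle\sum_{k} e^{-G(s,u_k)/\eta}\Big),
\]
and $\mathcal T V(s)=\min_k G(s,u_k)$ is taken over the same grid. The plan is to prove (i) by comparing the sum with its largest term, to obtain (ii) from monotonicity and translation-equivariance of the soft-min, and to get (iii) by combining (i) and (ii) in a fixed-point argument.

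\textbf{Step (i).} Let $m=\min_k G(s,u_k)$. Each term of the sum is bounded by $e^{-m/\eta}$, and the sum contains one term equal to $e^{-m/\eta}$. Hence
\[
\tfrac1{N_u}e^{-m/\eta}\le \tfrac1{N_u}\textstyle\sum_k e^{-G_k/\eta}\le e^{-m/\eta}.
\]
Applying $-\eta\log$, which is decreasing, gives $m\le \mathcal T^\eta V\le m+\eta\log N_u$.

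Equivalently, \eqref{eq:softdef} gives the same bounds. The lower bound follows because $\E_\pi G\ge m$ and $\KL{\pi}{\pi_0}\ge 0$. The upper bound follows by taking $\pi$ to be the Dirac mass at a minimizer, for which $\KL{\delta_{u_k}}{\pi_0}=\log N_u$.

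\textbf{Step (ii).} For monotonicity, note that $\mathcal R_{t,j}$ is an average with nonnegative weights $P_t[j,\cdot]$. So $V\le V'$ implies $G_V\le G_{V'}$ pointwise in $(s,u)$, and the soft-min $-\eta\log\E_{\pi_0}e^{-G/\eta}$ is nondecreasing in $G$.

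For the contraction, I use translation-equivariance. Adding a constant $c$ to $V$ adds $\delta c$ to $G$, because $\mathcal R$ reproduces constants. A constant shift of $G$ passes straight through the soft-min, so $\mathcal T^\eta(V+c)=\mathcal T^\eta V+\delta c$.

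Now set $c=\|V-V'\|_\infty$. Then $V'-c\le V\le V'+c$, and applying monotonicity to both inequalities gives
\[
\mathcal T^\eta V'-\delta c\le \mathcal T^\eta V\le \mathcal T^\eta V'+\delta c .
\]
This is the $\delta$-contraction. The same argument applies to $\mathcal T$, since a minimum is also monotone and translation-equivariant.

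\textbf{Step (iii).} In the periodic setting, the contraction is applied to the composition of the 52 stage operators over one cycle, which contracts with factor $\delta^{52}$. The fixed points $V^*$ and $V^\eta$ exist and are unique by Banach's theorem on bounded functions.

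The lower bound $V^\eta\ge V^*$ uses the lower half of (i), $\mathcal T^\eta\ge\mathcal T$, together with monotonicity. Iterating gives $(\mathcal T^\eta)^n V^*\ge \mathcal T^n V^*=V^*$, and letting $n\to\infty$ yields $V^\eta\ge V^*$.

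For the upper bound, write
\[
V^\eta-V^*=\mathcal T^\eta V^\eta-\mathcal T V^*
=(\mathcal T^\eta V^\eta-\mathcal T V^\eta)+(\mathcal T V^\eta-\mathcal T V^*).
\]
The first bracket is at most $\eta\log N_u$ by (i). The second is at most $\delta\|V^\eta-V^*\|_\infty$ by the contraction of $\mathcal T$. Taking sup norms and rearranging gives
\[
\|V^\eta-V^*\|_\infty\le \frac{\eta\log N_u}{1-\delta}.
\]

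\textbf{Main obstacle.} The delicate point is making the statement precise for state-dependent admissible sets $\mathcal U_{t,j}(s)$. If the grid is restricted to admissible releases, the count $N_u$ varies with $s$. I would handle this by bounding $\log|\mathcal U_{t,j}(s)|\le\log N_u$, which leaves every inequality above intact. I would also state explicitly that $\mathcal T$ means the minimum over the same grid: with a continuum minimum the lower bound in (i) still holds, but the upper bound would need a discretization term. Finally, (iii) is stated stagewise, and I would note that the periodic version follows because each stage operator shifts by at most $\eta\log N_u$ and contracts by $\delta$.
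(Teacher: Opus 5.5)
Your proposal is correct and follows the paper's proof essentially step for step: (i) bounds the uniform average of $e^{-G/\eta}$ between $N_u^{-1}e^{-\min G/\eta}$ and $e^{-\min G/\eta}$, (ii) uses monotonicity and translation-equivariance of the soft-min, and (iii) combines (i) and (ii) at the fixed points. The one cosmetic difference is that you split $V^\eta-V^*$ through $\mathcal T V^\eta$ and use the contraction of $\mathcal T$, whereas the paper splits through $\mathcal T^\eta V^*$ and uses the contraction of $\mathcal T^\eta$. Your caveat that $\mathcal T$ must be the minimum over the same grid for the upper bound in (i) matches the convention the paper's proof uses implicitly (``retaining only the minimizing grid point'').
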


\begin{proof}
(i) With $\pi_0$ uniform,
$\E_{\pi_0} e^{-G/\eta} \le e^{-\min G/\eta}$ gives the left inequality
after taking $-\eta\log$; and
$\E_{\pi_0} e^{-G/\eta} \ge \tfrac1{N_u} e^{-\min G/\eta}$ gives the
right. (ii) Monotonicity is inherited from monotonicity of
$G \mapsto -\eta\log\E_{\pi_0}e^{-G/\eta}$; the contraction factor is
$\delta$ because a constant shift $c$ in $V$ shifts $G$ by $\delta c$ and
the log-sum-exp is translation-equivariant. (iii) Apply (i) and (ii) to
the two fixed-point equations and sum the geometric series.
\end{proof}

Proposition~\ref{prop:soft}(iii) is the annealing license: solving at
temperature $\eta$ costs at most $\eta\log N_u/(1-\delta)$ in value, so a
schedule $\eta_1 > \eta_2 > \cdots \downarrow 0$ interpolates from a
smooth, well-conditioned problem--in which gradients (hence water
values) are stable and the state space is explored under a dispersed
Gibbs policy--to the exact problem, with a computable bound at every
stage of the homotopy. In the numerical study the annealed water value
visibly sharpens onto the exact one (Figure~\ref{fig:annealing}).

\begin{remark}[on exact linearization]\label{rem:hopfcole}
When the control cost \emph{is} the relative entropy between controlled
and passive path measures and the control acts through the noise
channels, the exponential transform $\psi = e^{-V/\eta}$ linearizes the
HJB equation and $V$ admits a Feynman--Kac/path-integral representation
\citep{todorov2009,kappen2005}. We record explicitly that the storage
problem \emph{fails both requirements}: the dispatch cost
\eqref{eq:cost} is an economic datum that cannot be replaced by a KL
penalty, and the control acts on the noiseless storage equation
\eqref{eq:stor} while the noise lives in the uncontrolled inflow equation
\eqref{eq:cir}. Exact linearization is therefore structurally unavailable
here; what survives--by design--is the smoothing role of
Proposition~\ref{prop:soft}. Claims of Hopf--Cole linearizability for
problems of this class should be treated with the same caution as global
Feller claims.
\end{remark}

\subsubsection{KL-penalized transition robustness and entropic risk}
\label{sec:robust}

The second use of \eqref{eq:dv} replaces the risk-neutral aggregation
$\mathcal R_{t,j}(v)=\sum_{j'}P_t[j,j']v_{j'}$ by the multiplier-robust
functional
\[
\rho_{\gamma}^{t,j}(v)
=\sup_{Q\ll P_t[j,\cdot]}
\left\{\E_Q[v]-\frac{1}{\gamma}\KL{Q}{P_t[j,\cdot]}\right\}.
\]
Thus a fixed $\gamma$ is exactly a \emph{KL-penalty multiplier}, not a
fixed ambiguity radius. If instead a radius $r>0$ is prescribed, define
$\mathcal R^{\mathrm{rob}}_{t,j}(v)=\sup\{\E_Q[v]:\KL{Q}{P_t[j,\cdot]}\le r\}$.
Lagrangian duality gives, for every $\gamma>0$,
\begin{equation}
\mathcal R^{\mathrm{rob}}_{t,j}(v)
\;\le\;
\rho_{\gamma}^{t,j}(v) + \frac{r}{\gamma},
\qquad
\rho_{\gamma}^{t,j}(v)
\;:=\;
\frac1\gamma \log \sum_{j'} P_t[j,j']\; e^{\gamma v_{j'}},
\label{eq:entropicrisk}
\end{equation}
and minimizing the right-hand side over $\gamma>0$ recovers the
fixed-radius problem under the usual constraint qualification. For the
multiplier formulation used by InFlow, the maximizing law at fixed
$\gamma$ is the Gibbs tilt
\begin{equation}
\tilde w_{j'} \;=\;
\frac{P_t[j,j']\, e^{\gamma v_{j'}}}{\sum_{k} P_t[j,k]\, e^{\gamma v_{k}}},
\label{eq:tilt}
\end{equation}
which overweights precisely the transitions into expensive states. The
functional $\rho_\gamma$ is the \emph{entropic risk measure}: convex,
monotone and translation-equivariant. Nested recursively, it is
time-consistent; moreover, under the law-invariance and regularity
conditions characterized by \citet{kupper2009}, the exponential/entropic
form is essentially singled out among dynamic certainty equivalents
\citep{ruszczynski2010,follmer2016}. Replacing $\mathcal R$ by
$\rho_\gamma$ in \eqref{eq:bellman} therefore yields a \emph{bona fide}
dynamic program, a member of the risk-averse SDDP family
\citep{shapiro2011,philpott2013,dowson2025convex}, and simultaneously the
Hansen--Sargent multiplier formulation of model misspecification robustness
\citep{hansen2008}: $\gamma$ prices model
distrust, and $\gamma \downarrow 0$ recovers the risk-neutral recursion
continuously since $\rho_\gamma(v) = \sum_{j'} P v_{j'} +
\tfrac{\gamma}{2}\operatorname{Var}_P(v) + O(\gamma^2)$.

Two structural observations close the loop with
Section~\ref{sec:feller}. The tilt \eqref{eq:tilt} is state- and
season-dependent: it is largest where the spread of $v_{j'}$ across
successor nodes is largest, which in this problem is the approach to and
interior of the dry season--the same weeks where the degenerating
transition density makes the nominal $P_t$ least reliable. The hedge thus
concentrates automatically where the Feller analysis says the model is
fragile, without specifying a parametric family of alternative transition laws. And computationally,
$\rho_\gamma$ is \emph{again} a log-sum-exp: the robust solver reuses the
identical primitive as the entropic smoothing of
Section~\ref{sec:soft}--once over controls with $\gamma = 1/\eta$, once
over transitions with the risk parameter $\gamma$. One functional, two
dualities, one implementation.

\subsection{Periodic Markov-chain SDDP and water-value duals}
\label{sec:algo}

The exogenous state is discretized into a chain over the \emph{weekly
mean} inflow: bins $B_1,\dots,B_N$ with week-of-year transition matrices
$P_t$ and node values $q_{t,j} = \E[\bar Q_{\text{week}} \mid \text{week } t,\,
\bar Q \in B_j]$, all estimated from long simulations of
\eqref{eq:cir} under the positivity-preserving schemes, with noncentral-$\chi^2$ CIR rows--exact for the frozen-$\theta$ weekly approximation--as a smoothing prior. (Appendix~\ref{app:chain}
documents why the naive analytic chain with bin-midpoint nodes is
inadmissible: it carries a $+34\%$ bias in annual water, a Jensen-type
artifact of right-skewed densities that a certification benchmark
catches immediately.)

The cost-to-go $V_{t,j}(s)$ is convex and nonincreasing in $s$
(Section~\ref{sec:model}) and is represented as the upper envelope of
affine minorants (``cuts''). The stage subproblem at $(t,j,s)$ is the
linear program
\begin{equation}
\begin{aligned}
V_{t,j}(s) = \min_{u,\,w,\,y,\,s',\,\varphi}\quad
& \Delta\Big(\textstyle\sum_{k} c^{\mathrm{seg}}_k y_k + \varepsilon_w w\Big)
 + \delta\,\varphi \\
\text{s.t.}\quad
& s' + \Delta u + \Delta w = s + \Delta q_{t,j}
&& \big[\mu\big] \\
& u + \textstyle\sum_k y_k \ \ge\ D_t, \qquad
0 \le y_k \le \bar y_k,\\
& 0 \le u \le U_{\max}, \qquad w \ge 0, \qquad
0 \le s' \le S_{\max},\\
& \varphi \ \ge\ a_m + b_m\, s' \qquad \forall m,
\end{aligned}
\label{eq:stageLP}
\end{equation}
where the $y_k$ implement the piecewise-linear thermal cost and
$(a_m,b_m)$ are the current cuts for the (risk-adjusted) future cost
$W_{t,j}(s') := \rho^{t,j}_\gamma\big(V_{t+1,\cdot}(s')\big)$. The dual
multiplier $\mu$ of the storage balance is exactly
$\partial V_{t,j}/\partial s$, i.e., $\ssv = -\mu$: \emph{the water value
is an LP dual}, and every SSV reported in Section~\ref{sec:results} is
read off HiGHS in this way (we verified $\mu$ against finite differences
to four decimals). Periodicity is imposed by wrapping stage $t=51$ to
$t=0$ cuts with the per-stage discount, the infinite-horizon periodic
construction of \citet{shapiro2020}; cut envelopes then increase
monotonically to the steady-seasonal fixed point.

The backward pass generates cuts at trial states visited by forward
simulation of the true SDE. Its correctness under the entropic risk
aggregation is the following lemma, which is where
Donsker--Varadhan does the work.

\begin{lemma}[validity of tilted cuts]\label{lem:cut}
Fix $t,j$ and a trial point $\hat s$. Suppose for each successor node
$j'$ we have an affine minorant
$v_{j'} + \beta_{j'}(\,\cdot - \hat s\,) \le V_{t+1,j'}(\cdot)$
with $v_{j'} = \underline V_{t+1,j'}(\hat s)$ (e.g.\ from solving
\eqref{eq:stageLP} at $(\,t{+}1,j',\hat s\,)$, with $\beta_{j'}$ its
storage dual). Let $\tilde w$ be the tilt \eqref{eq:tilt} evaluated at
$v$, and set
\[
\bar b \;=\; \sum_{j'} \tilde w_{j'}\, \beta_{j'},
\qquad
\bar a \;=\; \rho^{t,j}_\gamma(v) \;-\; \bar b\,\hat s .
\]
Then $\bar a + \bar b\, s \;\le\; W_{t,j}(s)$ for all
$s \in [0,S_{\max}]$, with equality at $s = \hat s$ whenever the
minorants are tight there. The statement covers the risk-neutral case
$\gamma \to 0$ with $\tilde w \to P_t[j,\cdot]$.
\end{lemma}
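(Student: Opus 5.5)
The plan rests on two properties of the entropic risk measure: convexity of $\rho_\gamma := \rho^{t,j}_\gamma$ on $\mathbb R^{N}$, and monotonicity. Its gradient at $v$ is exactly the Gibbs tilt \eqref{eq:tilt}. From these, a supporting-hyperplane inequality yields the cut after composing with the successor minorants.

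\emph{Step 1: convexity and the gradient.} A direct computation from \eqref{eq:entropicrisk} gives $\partial \rho_\gamma/\partial v_{j'} = \tilde w_{j'}$. Convexity I would take from the Donsker--Varadhan representation \eqref{eq:dv}, which writes $\rho_\gamma(v)=\sup_{Q}\{\E_Q[v]-\gamma^{-1}\KL{Q}{P_t[j,\cdot]}\}$ as a supremum of affine functions of $v$. This representation gives the tangent inequality with no calculus. For any $x\in\mathbb R^N$, evaluate the supremum at the particular measure $Q=\tilde w$, which is the maximizer at $v$:
\[
\rho_\gamma(x)\;\ge\;\sum_{j'}\tilde w_{j'}x_{j'}-\tfrac1\gamma\KL{\tilde w}{P_t[j,\cdot]}
\;=\;\rho_\gamma(v)+\sum_{j'}\tilde w_{j'}(x_{j'}-v_{j'}).
\]
The equality uses the fact that the supremum at $v$ is attained by $\tilde w$. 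Consequently $\rho_\gamma(v)=\sum\tilde w_{j'} v_{j'}-\gamma^{-1}\mathrm{KL}$.

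\emph{Step 2: monotone composition.} Fix $s\in[0,S_{\max}]$ and set $x_{j'}=V_{t+1,j'}(s)$. By hypothesis, $x_{j'}\ge v_{j'}+\beta_{j'}(s-\hat s)$. Since $\tilde w_{j'}\ge 0$, the inequality from Step 1 gives
\[
W_{t,j}(s)=\rho_\gamma(x)\ge \rho_\gamma(v)+\sum_{j'}\tilde w_{j'}\beta_{j'}(s-\hat s)=\bar a+\bar b\,s .
\]
At $s=\hat s$ the right side equals $\rho_\gamma(v)$. If the minorants are tight there, so that $v_{j'}=V_{t+1,j'}(\hat s)$, then $W_{t,j}(\hat s)=\rho_\gamma(v)$, which is the equality case. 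If $v_{j'}$ are only lower values of an approximation $\underline V$, validity still holds because the argument only used $x\ge$ minorant.

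\emph{Step 3: the risk-neutral limit.} As $\gamma\to0$, $e^{\gamma v_{j'}}\to1$, so $\tilde w\to P_t[j,\cdot]$. Also $\rho_\gamma(v)\to\sum P_t[j,j']v_{j'}$ by the expansion recorded after \eqref{eq:tilt}. Passing to the limit in the pointwise inequality of Step 2 then recovers the standard averaged cut, and the equality case passes to the limit as well.

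\emph{Main obstacle.} The only delicate point is Step 1: I must make sure that the tangent inequality is global and not merely local. The variational formula gives this directly, so I would rely on it rather than on second-order arguments. A second subtlety is boundedness: \eqref{eq:dv} is stated for bounded $X$, and on a finite successor set every vector is bounded, so there is no issue.
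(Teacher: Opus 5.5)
Your proof is correct and is essentially the paper's argument: both evaluate the Donsker--Varadhan supremum at the feasible measure $\tilde w$, and both use that $\tilde w$ attains the supremum at $v$, so that $\E_{\tilde w}[v]-\gamma^{-1}\KL{\tilde w}{P_t[j,\cdot]}=\rho_\gamma(v)$. The only difference is the order of the steps: the paper first uses monotonicity of $\rho_\gamma$ to pass from $V_{t+1,\cdot}(s)$ to the minorant vector $v+\beta(s-\hat s)$ and then applies the variational bound, whereas you apply the bound directly at $x=V_{t+1,\cdot}(s)$ and then use $\tilde w\ge 0$, which amounts to the same thing.
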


\begin{proof}
Fix $s$. By assumption and monotonicity of $\rho_\gamma$,
$W_{t,j}(s) = \rho_\gamma\big(V_{t+1,\cdot}(s)\big)
\ge \rho_\gamma\big(v + \beta(s-\hat s)\big)$.
By \eqref{eq:dv} with the (feasible, not necessarily optimal) measure
$\tilde w$,
\[
\rho_\gamma\big(v + \beta(s-\hat s)\big)
\;\ge\;
\E_{\tilde w}\big[v + \beta(s-\hat s)\big] - \tfrac1\gamma\KL{\tilde w}{P}
\;=\;
\Big(\E_{\tilde w}[v] - \tfrac1\gamma\KL{\tilde w}{P}\Big)
+ \bar b\,(s-\hat s).
\]
Since $\tilde w$ is the Donsker--Varadhan maximizer \emph{for the vector
$v$}, the bracket equals $\rho_\gamma(v)$, and the right-hand side is
$\bar a + \bar b s$. Equality at $\hat s$ is immediate.
\end{proof}

Lemma~\ref{lem:cut} says the natural construction--solve the successor
subproblems, tilt the probabilities by \eqref{eq:tilt}, average the
duals under the tilt--produces supporting hyperplanes of the
\emph{robust} cost-to-go. Cuts therefore remain valid lower bounds
throughout, and the lower bound reported by the algorithm is a true
bound on the risk-adjusted value. Convergence of sampled cutting-plane
schemes of this type (finitely many stages per cycle, finite noise
support, polyhedral subproblems, monotone valid cuts sampled along
forward trajectories) follows the standard arguments of
\citet{philpott2008,girardeau2015} extended to the periodic setting by
\citet{shapiro2020}; we do not restate them.

\subsection{InFlow algorithm and certification discipline}
\label{sec:anneal}

Algorithm~\ref{alg:inflow} states the certification-mode workflow. The key
implementation rule is that the guide and the certified cut store are distinct
objects.

\begin{algorithm}[t]
\caption{InFlow: periodic entropic SDDP with independent HJB cross-certification}
\label{alg:inflow}
\begin{algorithmic}[1]
\Require Seasonal model parameters; weekly stages $t=0,\ldots,51$; robustness
multiplier $\gamma\ge0$; annealing schedule $\eta_1>\cdots>\eta_K\downarrow0$.
\State Simulate the nonnegative seasonal CIR process and construct weekly-mean
nodes $q_{t,j}$ and transitions $P_t$; use frozen-$\theta$ CIR rows only as a
smoothing prior.
\State Independently solve the state-constrained periodic HJB problem on an
$(s,q)$ grid and store $\lambda_W^{\rm HJB}=-\partial_sV$ for cross-checking.
\State \textbf{Guide phase:} iterate the soft Bellman operator with nested
$\rho_\gamma$ along $\eta_1,\ldots,\eta_K$, followed by hard-min sweeps; retain
the resulting policy/value only as a state-exploration guide.
\State Initialize an empty certified cut store $\mathcal C$.
\For{SDDP iteration $k=1,2,\ldots$}
  \State Forward-simulate the true SDE; use the guide only to propose trial
  storage states (or the current exact policy after the first iteration).
  \For{$t=51,\ldots,0$ and each trial storage $\hat s$}
    \State Solve every successor stage LP with HiGHS using $\mathcal C$; record
    values $v_{j'}$ and storage-balance duals $\beta_{j'}$.
    \State Compute Gibbs weights
    $\tilde w_{j'}\propto P_t[j,j']e^{\gamma v_{j'}}$ and add the exact cut
    $(\bar a,\bar b)$ of Lemma~\ref{lem:cut} to $\mathcal C$.
  \EndFor
  \State Record the certified lower bound and dual water values
  $\lambda_W^{\rm SDDP}=-\mu$.
\EndFor
\State Cross-check $\lambda_W^{\rm SDDP}$ against the independent HJB gradient;
evaluate selected policies on common out-of-sample SDE trajectories.
\end{algorithmic}
\end{algorithm}

Phase A of the solver iterates the entropic operator
\eqref{eq:softlse}-- with $\rho_\gamma$ inside when $\gamma>0$ --on a
storage grid, along a temperature schedule
$\eta \in \{10^{-2}, 3\times10^{-3}, 10^{-3}\} \downarrow 0$, each stage
accelerated by one Richardson extrapolation along the annual
contraction (factor $e^{-\rho}$). By Proposition~\ref{prop:soft} the
final hard sweep is within grid error of the exact chain value. For the single-reservoir benchmark we also retain the original
\emph{benchmark mode}, in which conservative affine approximations extracted
from phase A initialize the working envelope before exact HiGHS refinement;
this is computationally useful for policy generation but it is not the source
of the paper's unconditional lower-bound claim. In \emph{certification mode}
(Algorithm~\ref{alg:inflow}) the exact store starts empty and phase A affects
only trial-state selection.

The status of those seeds requires care, and we state it precisely because
it is easy to overclaim. Proposition~\ref{prop:soft}(i) says the entropic
operator \emph{over}estimates: $\mathcal T\le\mathcal T^\eta$. Affine
minorants of a phase-A value function are therefore minorants of $V^\eta$
and not, a priori, of the exact chain value; the discrepancy is bounded by
$\eta\log N_u/(1-\delta)$ plus phase-A discretization error, and at weekly
discounting the first term alone is numerically uninformative
(Appendix~\ref{app:softproof}). Seeds are consequently \emph{not}
guaranteed valid cuts, and the correct discipline is a strict separation
between guidance and certification: the entropic phase may determine where
the exact solver generates cuts, never what the solver certifies. Under
that discipline the reported lower bound is valid unconditionally by
Lemma~\ref{lem:cut} and Theorem~\ref{thm:posterior}. Where a seeded working envelope is used for the single-reservoir policy
experiments, it is labelled explicitly as benchmark mode. The cross-certification
claim is additionally repeated with an exact-cut-only store, and the cascade
uses strict guide/store separation throughout.

In one storage dimension a grid is affordable and phase A could stand
alone. The architecture was designed on the expectation that in higher
dimensions phase A would be replaced by soft \emph{cuts}--the log-sum-exp
of affine functions is smooth and convex--with the identical annealing
role, giving a smooth global phase that finds the region and an exact
polyhedral phase that certifies it. Section~\ref{sec:cascade} tests that
expectation on a two-reservoir cascade and does not confirm it: the
certification layer transfers, the acceleration does not. We retain the
construction as a proposal and mark it explicitly as unvalidated.

\subsection{Numerical protocol and reproducibility}
\label{sec:protocol}

The normalized benchmark parameters are summarized in Table~\ref{tab:params}.
The values are scenario-design parameters chosen to create a seasonal storage
problem with a pronounced dry-season stress; they are not fitted estimates for
an identified reservoir.

\begin{table}[t]
\centering
\caption{Normalized single-reservoir benchmark parameters.}
\label{tab:params}
\begin{tabular}{lll}
\toprule
Quantity & Value & Interpretation\\
\midrule
$S_{\max}$ & $0.4$ & storage capacity (about five months of mean inflow)\\
$U_{\max}$ & $3.0$ & maximum release rate\\
$\kappa,\sigma_0$ & $8,\,2$ & CIR mean reversion and volatility\\
$\bar\theta,a$ & $1,\,0.8$ & seasonal inflow mean and amplitude\\
$\bar D,b$ & $1,\,0.4$ & demand mean and amplitude\\
$c_1,c_2$ & $0.5,\,2$ & thermal shortfall-cost coefficients\\
$\varepsilon_w,\rho$ & $0.05,\,0.1$ & spill penalty and annual discount rate\\
Stages / nodes / segments & $52/11/8$ & weekly stages, inflow nodes, PWL cost segments\\
\bottomrule
\end{tabular}
\end{table} Mean-reverting square-root dynamics with a periodic level are a
standard reduced-form representation of seasonal inflow and of
mean-reverting energy-market drivers \citep{cir1985,schwartz1997}. Pronounced
annual seasonality is also observed empirically in long-term Senegal River
hydropower inflow records \citep{affognon2026wavelet}; the normalized CIR
benchmark used here is deliberately not calibrated to those observations. A
storage capacity of a few months of mean inflow with counter-seasonal
demand is the configuration in which seasonal transfer, rather than
within-week balancing, determines the water value
\citep{gjelsvik1992,loucks2017}. The Feller ratio $4\theta(t)$ falls below
one in the five driest weeks (minimum $0.8$); Figure~\ref{fig:validation} (top) shows the
seasonal geometry. The baseline HJB benchmark uses a $41\times41$ $(s,q)$ grid with $2080$
steps per year and an explicit monotone upwind scheme; refinement studies use
$21^2$, $61^2$, and $81^2$ state grids with time steps tightened to maintain
the CFL condition. All stage LPs are solved by HiGHS \citep{huangfu2018}.
The Markov chain has $11$ weekly-mean inflow nodes and the stage LP uses eight
piecewise-linear thermal-cost segments. The main out-of-sample comparison uses
$3{,}000$ independent trajectories, four simulated years per trajectory with
the first year discarded as warmup, and common random numbers across
$\gamma\in\{0,2,5\}$. Policy contrasts are bootstrapped by trajectory, not by
year, with $10{,}000$ paired resamples. All random seeds and scripts are
included in the accompanying reproducibility archive.

Because the HJB benchmark has a continuous inflow coordinate while the
weekly chain is node-based, scalar cross-comparisons use an explicitly
matched pair of reference states,
\begin{equation}
\begin{aligned}
 x_{\mathrm{ref}}^{\mathrm{HJB}}&=(t=0,\;s=0.2000,\;q=1.0000),\\
 x_{\mathrm{ref}}^{\mathrm{chain}}&=(t=0,\;s=0.2000,\;j=5,\;q_{0,5}=1.0923).
\end{aligned}
\label{eq:refstate}
\end{equation}
where the continuous state $q=1$ (the annual mean) maps to chain node $5$
of $11$, whose conditional mean weekly inflow is $1.0923$. Thus the two
states represent the same intended physical reference--the first week of
the annual cycle and a half-full reservoir--without identifying the
continuous inflow value with the node conditional mean. The risk-adjusted
values $0.657$, $0.775$ and $1.005$ quoted for $\gamma=0,2,5$ in
Section~\ref{sec:pessimistic}, and the chain dynamic-program value $0.6619$
in Section~\ref{sec:certification}, are evaluated at
$x_{\mathrm{ref}}^{\mathrm{chain}}$; the HJB reference state is used only for diagnostic value-level comparisons;
the primary cross-certification object is the seasonal storage derivative. Seasonal SSV profiles are instead
reported as functions of week $t$ at a fixed storage fraction and at the
inflow node nearest the seasonal median, $j(t)=\arg\min_j
|q_{t,j}-\theta(t)|$, because a single fixed node would be atypical in
most of the year; the storage fraction is stated with each figure.

\section{Results}
\label{sec:results}

\begin{figure}[t]
\centering
\includegraphics[width=.86\linewidth]{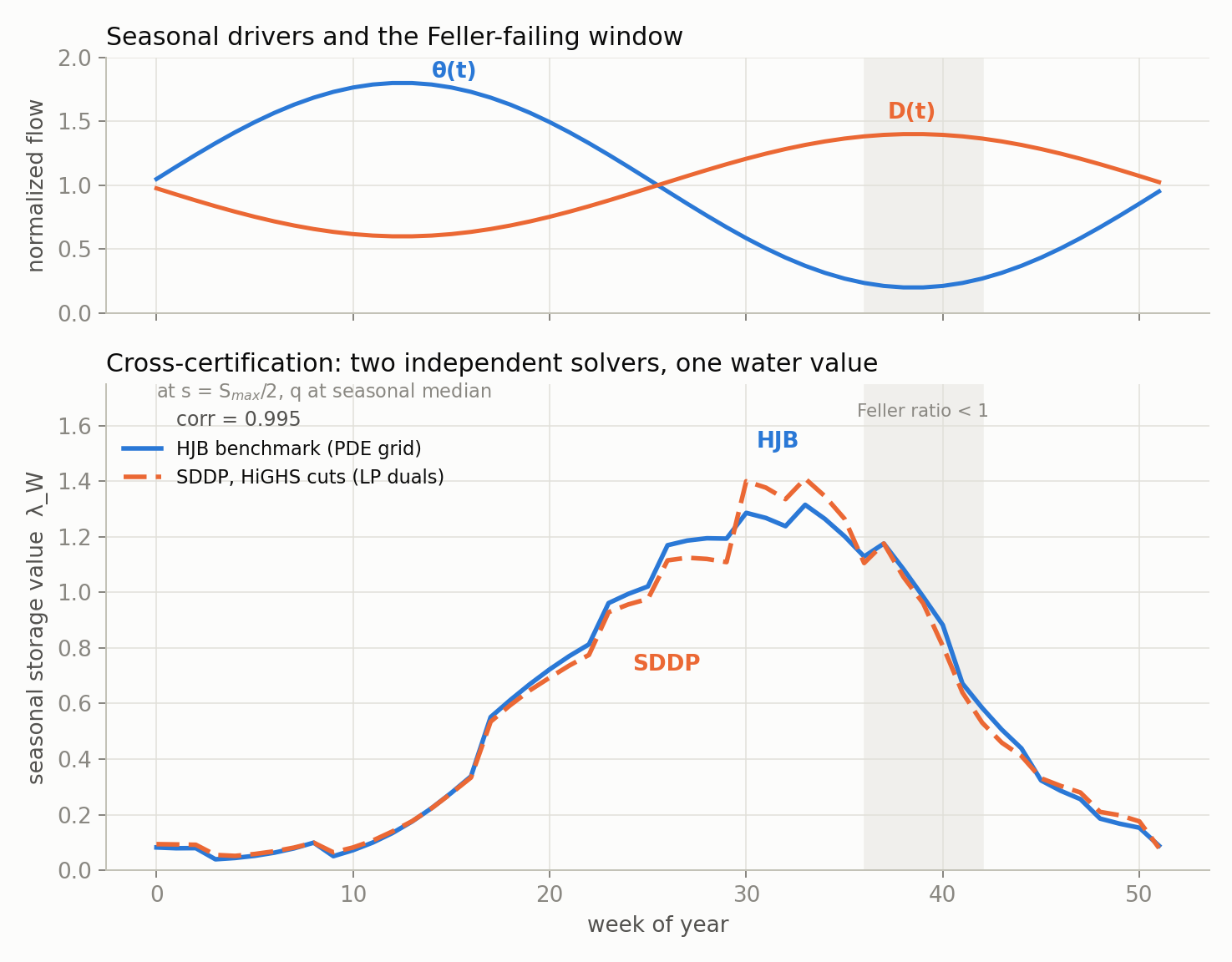}
\caption{Top: seasonal inflow mean $\theta(t)$ and demand $D(t)$; the
shaded band marks the weeks with Feller ratio $<1$. Bottom:
cross-certification of the SSV. The water value computed as the storage
gradient of the HJB viscosity solution (blue) and as the HiGHS dual of
the SDDP storage balance (orange, dashed) agree with correlation
$0.995$; both peak entering the drought window and collapse in the wet
filling season.}
\label{fig:validation}
\end{figure}

\subsection{HJB--SDDP cross-certification and numerical refinement}
\label{sec:certification}

Figure~\ref{fig:validation} (bottom) is the central cross-check: two solvers
with disjoint machinery--a PDE storage gradient and a linear-programming
balance dual--produce seasonal storage values with correlation $0.995$ and a
mean level difference of only a few percent. Both peak on the approach to the
dry window and collapse during refill. The comparison first rejected a
seemingly reasonable midpoint chain that overstated annual inflow by $34\%$;
replacing midpoint representatives by simulated conditional means reduced the
annual-mean inflow discrepancy to $0.5\%$. This is precisely the type of model
representation error that an internal SDDP convergence statistic cannot
detect.

Two additional checks strengthen the comparison. First, an exact-cut-only
SDDP replication, in which the certified store contains no phase-A-derived
seed cuts, gives correlation $0.9960$ with the baseline HJB SSV profile, with a
52-week HJB--SDDP seasonal water-value profile RMSE of $0.0425$ and an
annual-mean level difference of $-0.0105$. Its scalar lower
bound is still increasing after the reported sweeps, so we use this experiment
as a \emph{gradient cross-check}, not as a claim that cold-start SDDP has
converged in value. Second, HJB refinement shows that the water-value derivative
stabilizes appreciably faster than the absolute value level. Table~\ref{tab:hjbmesh}
reports the refinement sequence. From $61^2$ to $81^2$ states the annual-mean
SSV changes from $0.5783$ to $0.5747$ ($0.6\%$) and the peak changes from
$1.3332$ to $1.3344$ (less than $0.1\%$), whereas the absolute value at the
reference state still changes materially. Increasing the inflow truncation
from $q_{\max}=4.5$ to $6.0$ at comparable resolution leaves the annual-mean
SSV essentially unchanged ($0.5783$ in both runs). We therefore base the
cross-certification claim on the \emph{marginal water value}, and we do not use
the coarse-grid HJB value level to decompose a continuous/discrete error.

\begin{table}[t]
\centering
\caption{HJB refinement diagnostic. $V_{\rm ref}$ is shown to document its
slower grid convergence; the SSV derivative is the cross-certification object.}
\label{tab:hjbmesh}
\begin{tabular}{rrrrr}
\toprule
$(N_s,N_q)$ & steps/year & $V_{\rm ref}$ & mean SSV & peak SSV\\
\midrule
$(21,21)$ & 1040 & 1.0430 & 0.6051 & 1.3305\\
$(41,41)$ & 2080 & 0.8756 & 0.5852 & 1.3319\\
$(61,61)$ & 4680 & 0.8200 & 0.5783 & 1.3332\\
$(81,81)$ & 8320 & 0.7916 & 0.5747 & 1.3344\\
\bottomrule
\end{tabular}
\end{table}

\begin{figure}[t]
\centering
\includegraphics[width=.8\linewidth]{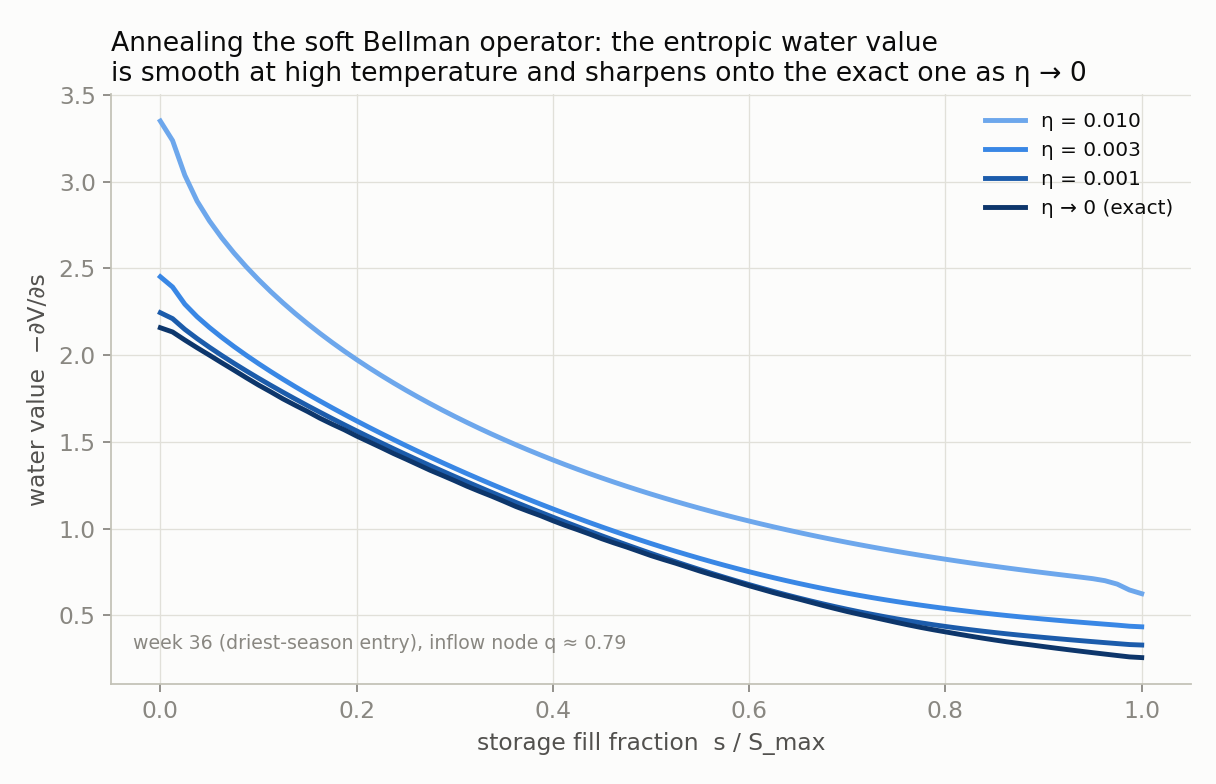}
\caption{Annealing the entropic Bellman operator
(Proposition~\ref{prop:soft}) at the entry to the dry season: the water
value $-\partial_s V^\eta$ is smooth and biased upward at high
temperature and sharpens monotonically onto the exact profile as
$\eta \downarrow 0$.}
\label{fig:annealing}
\end{figure}

\subsection{Entropic annealing and the robust seasonal storage value}
\label{sec:pessimistic}

Figure~\ref{fig:robust} shows the SSV under
$\gamma \in \{0, 2, 5\}$. These three values were fixed before the
out-of-sample comparison as transparent design points: $\gamma=0$ is the
risk-neutral limit, $\gamma=2$ represents moderate model distrust, and
$\gamma=5$ is a deliberately strong stress setting. They are a sensitivity
bracket, not estimates and not evidence of global monotonicity of the SSV
gradient for every $\gamma$. For the three tested policies, the robust water
value dominates the neutral one everywhere, with risk-adjusted values at
$x_{\mathrm{ref}}^{\mathrm{chain}}$ of $0.657$, $0.775$, and $1.005$, respectively.
The entropic cost functional itself is continuous and nondecreasing in
$\gamma$ for a fixed random cost, but differentiation with respect to
storage means that pointwise monotonicity of its gradient is not automatic.

In an operational calibration, $\gamma$ should be selected jointly with an
empirical KL ambiguity radius rather than chosen from these three labels.
One practical procedure is to estimate seasonal transition rows on rolling
or bootstrap inflow samples, construct a confidence radius $r$ for their KL
deviation, and choose the multiplier $\gamma$ whose exponentially tilted
transition rows attain the target radius (or choose on the validated
mean--CVaR frontier subject to an operator's reliability constraint).
Stability should then be reported over confidence levels and time blocks.
This is the empirical counterpart of the radius--multiplier relation in
Eq.~\eqref{eq:entropicrisk}; the present synthetic experiment demonstrates the
mechanism but does not perform that statistical calibration. The structural
finding is the
\emph{location} of the lift: in absolute terms it is largest in the
storage-building weeks $15$--$30$ and $42$--$48$, and proportionally it
is smallest inside the drought window itself. The economics are
transparent ex post. Within the drought, scarcity is realized and the
SSV is pinned by current shortage costs, which the adversary cannot
worsen much; ahead of the drought, the adversary can still deepen the
coming dry season, and the only hedge is to carry more storage into it.
Robustness therefore operates as a \emph{precautionary buffer
instrument}: it raises the price of water exactly when there is still
time to store it. This shoulder-season concentration is therefore treated as a structural
finding of the present benchmark, not as a universal law for all storage
systems.

\begin{figure}[t]
\centering
\includegraphics[width=.86\linewidth]{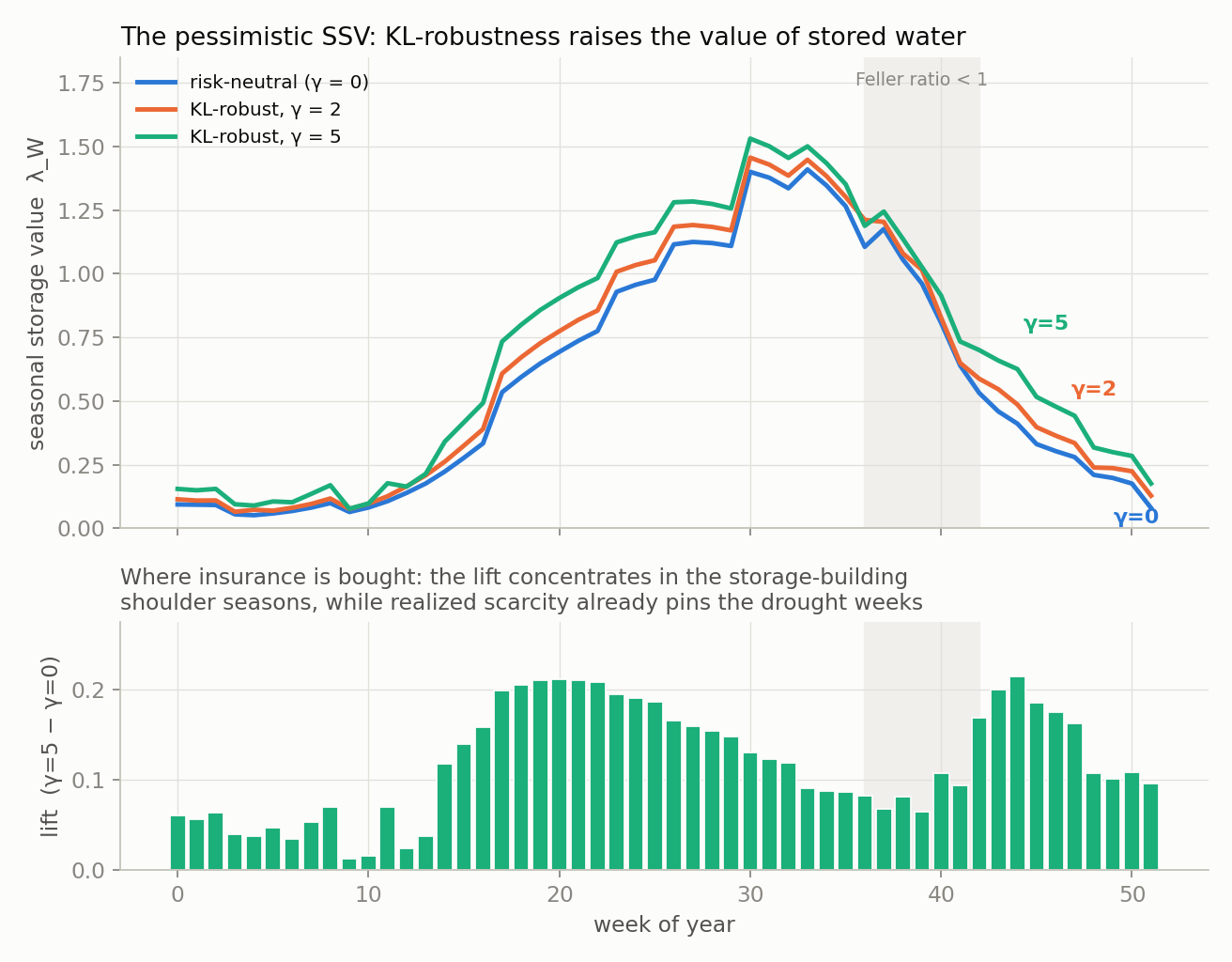}
\caption{Top: seasonal storage value at mid-storage under
$\gamma\in\{0,2,5\}$; the robust SSV dominates the neutral one.
Bottom: the lift $\gamma{=}5$ minus $\gamma{=}0$ concentrates in the
shoulder seasons where hedging by storage is still possible, not in the
drought where scarcity is already realized.}
\label{fig:robust}
\end{figure}

\subsection{Out-of-sample value of robustness}
\label{sec:oos}

The benchmark-mode SDDP policies were evaluated on the original continuous
SDE using $3{,}000$ independent trajectories. Each trajectory is simulated for
four years, with the first discarded as warmup, yielding $9{,}000$ evaluation
trajectory-years. The same random inflow paths are used for all policies. The
online decision objective uses the same eight-segment piecewise-linear stage
cost as the HiGHS subproblem, whereas realized performance is scored using the
original quadratic thermal cost in \eqref{eq:cost}. An exact breakpoint
enumeration of the one-dimensional convex stage objective was checked against
$2{,}000$ random HiGHS stage solves and reproduced every release to numerical
precision. Two worlds are considered: the nominal calibration and a stressed
world in which the dry-season mean inflow is $40\%$ lower. Uncertainty in
policy contrasts is quantified by a paired trajectory-cluster bootstrap with
$10{,}000$ resamples.

\begin{table}[t]
\centering
\caption{Out-of-sample yearly cost from $3{,}000$ independent trajectories.
CVaR$_{90}$ is the mean of the worst decile. Percentages are relative to
$\gamma=0$ in the same world.}
\label{tab:oos}
\begin{tabular}{llccc}
\toprule
World & Policy & mean & CVaR$_{90}$ & worst year \\
\midrule
nominal & $\gamma=0$ & 0.07490 & 0.31947 & 0.6901\\
nominal & $\gamma=2$ & 0.07505 ($+0.2\%$) & 0.30617 ($-4.2\%$) & 0.6736\\
nominal & $\gamma=5$ & 0.07833 ($+4.6\%$) & 0.28355 ($-11.2\%$) & 0.6433\\
\midrule
stressed & $\gamma=0$ & 0.10446 & 0.39058 & 0.7358\\
stressed & $\gamma=2$ & 0.10321 ($-1.2\%$) & 0.37476 ($-4.0\%$) & 0.7161\\
stressed & $\gamma=5$ & 0.10406 ($-0.4\%$) & 0.34562 ($-11.5\%$) & 0.6909\\
\bottomrule
\end{tabular}
\end{table}

For $\gamma=5$ relative to $\gamma=0$, the nominal mean-cost difference is
$0.00343$ with paired-bootstrap $95\%$ interval $[0.00306,0.00381]$, while
the CVaR$_{90}$ difference is $-0.03592$ with interval
$[-0.03773,-0.03409]$. Under stress, the mean difference is $-0.00039$ with
interval $[-0.00084,0.00005]$, so it is not resolved from zero, whereas the
CVaR$_{90}$ difference is $-0.04495$ with interval
$[-0.04641,-0.04350]$. The interpretation is therefore stable under the
larger experiment: robustness behaves like insurance in the nominal world,
trading a modest mean premium for a substantially thinner bad-year tail;
when the dry-season model is wrong in the pessimistic direction, the premium
vanishes while the tail protection remains.

\begin{figure}[t]
\centering
\includegraphics[width=.9\linewidth]{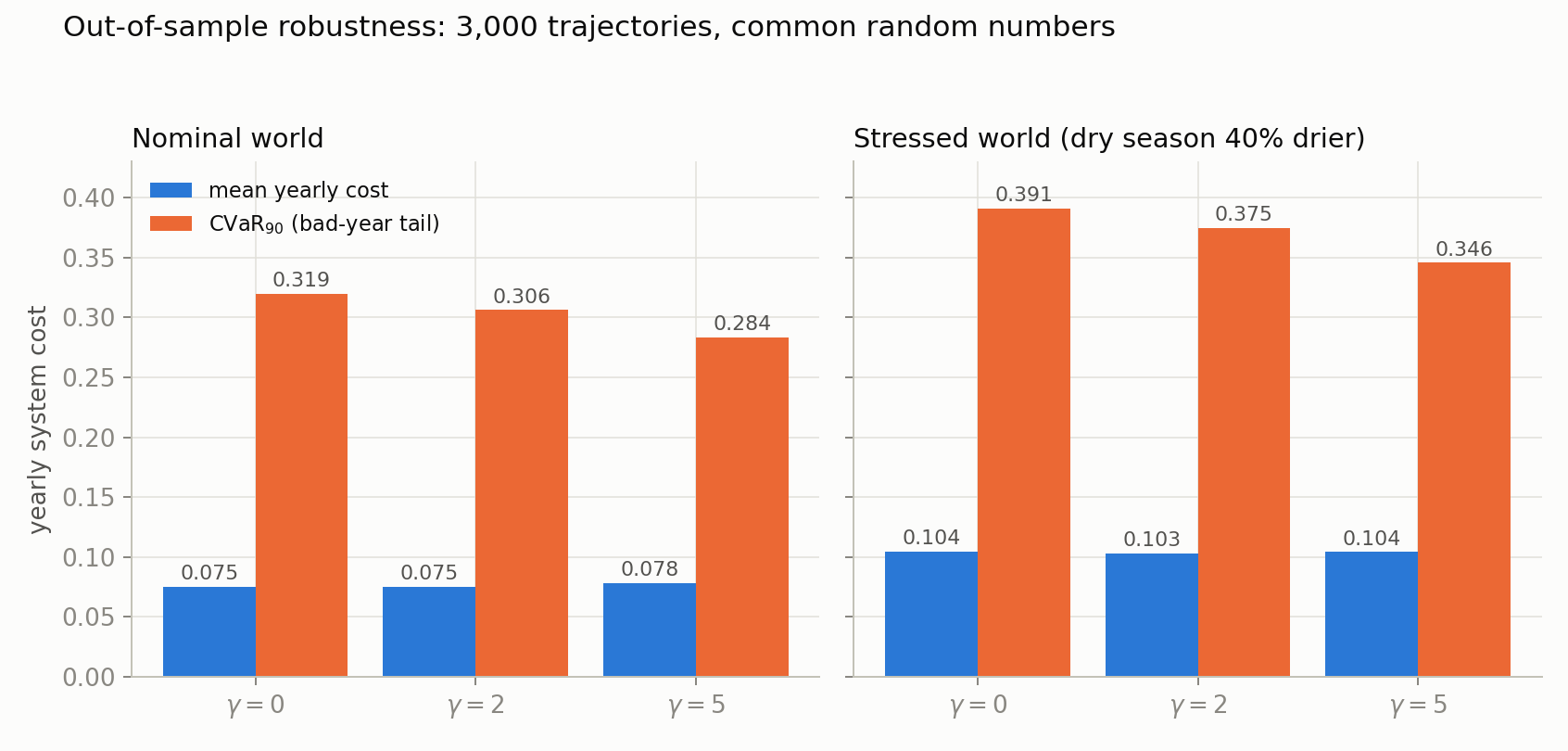}
\caption{Out-of-sample mean and CVaR$_{90}$ of yearly cost under the nominal
and stressed worlds, using $3{,}000$ independent trajectories and common
random numbers across policies.}
\label{fig:oos}
\end{figure}

\subsection{Sensitivity to the robustness coefficient}
\label{sec:gammasweep}

The three values $\gamma\in\{0,2,5\}$ reported above were chosen as a
risk-neutral baseline, a mild hedge and a strong hedge; they are not
distinguished points of the model. To establish that nothing in the
comparative statics depends on that choice, the value/dual pipeline--annealed phase A, cut generation, and exact
refinement--was re-run on the twelve-point grid
$\gamma\in\{0,\,0.25,\,0.5,\,1,\,1.5,\,2,\,3,\,4,\,5,\,6.5,\,8,\,10\}$.
Figure~\ref{fig:gammasweep} reports the result.

The risk-adjusted value at $x_{\mathrm{ref}}^{\mathrm{chain}}$ rises smoothly and strictly
from $0.657$ at $\gamma=0$ to $1.518$ at $\gamma=10$, with forward
differences increasing monotonically from $0.053$ to $0.110$: the curve is
strictly increasing and numerically convex, with no kink, plateau or
threshold anywhere in the range. Strict monotonicity is not an artifact but
a theorem--Proposition~\ref{prop:Vmono} in Appendix~\ref{app:gamma}, a
consequence of $\gamma\mapsto\rho_\gamma$ being nondecreasing--and the
computation confirms it at all $11$ successive gaps. The
\emph{season-averaged} seasonal storage value is likewise strictly
increasing, from $0.582$ to $0.840$. The qualitative finding of
Section~\ref{sec:pessimistic}--that the robust uplift concentrates in the
storage-building shoulder seasons rather than inside the drought--is
stable across the whole range, as the right panel of
Figure~\ref{fig:gammasweep} shows: the shape of the lift is essentially
invariant and only its amplitude scales with $\gamma$. The three reported
values are therefore representative, and $\gamma=5$ is not a special point.

Two caveats belong in the record. First, monotonicity holds for the value
and for the season-averaged SSV, but \emph{not} pointwise for the weekly
SSV: across the $11\times52$ successive increments, $49$ ($8.6\%$) are
negative, with mean magnitude $0.026$, i.e.\ about $6\%$ of the local
level. This is expected rather than anomalous. Proposition~\ref{prop:Vmono}
orders the value functions, and a pointwise-increasing family of convex
functions need not have ordered slopes (Remark~\ref{rem:ssvmono}). A
control experiment quantifies how much of the effect is numerical: re-running
$\gamma=2$ with five different random seeds gives a seed-to-seed
peak-to-peak spread of the weekly dual of $0.0094$ on average, and
$53\%$ of the observed reversals are smaller than that spread. The
remainder is genuine non-monotonicity of the gradient. We therefore claim
monotonicity of the value (proved) and of the season-averaged SSV
(observed), and explicitly not pointwise monotonicity of the weekly SSV.

Second, the small-$\gamma$ behaviour admits an independent analytic check.
Proposition~\ref{prop:rhomono} gives
$\rho_\gamma(v)=\E[v]+\tfrac{\gamma}{2}\operatorname{Var}(v)+O(\gamma^2)$,
so by the envelope theorem the initial slope of the value in $\gamma$ must
equal one half of the discounted sum of the conditional variances of the
continuation value along the nominal closed loop. Evaluated inside the
chain dynamic program--where both sides use the identical operator--the
secants $(V^{(\gamma)}-V^{(0)})/\gamma$ at
$\gamma\in\{0.05,0.1,0.2,0.4\}$ extrapolate to $0.0526$ as
$\gamma\downarrow0$, against an analytic prediction of
$0.0489\pm0.0001$, an agreement of $7.7\%$ attributable to the storage
grid and to the neglected $O(\gamma)$ term. The same comparison performed
on the end-to-end pipeline value instead of the chain value disagrees by a
factor $1.5$, because the pipeline additionally contains the
piecewise-linear cost surrogate and the sampled cut envelope; we report the
chain-internal comparison because it is the only one that isolates the
quantity the expansion actually describes.

\begin{figure}[t]
\centering
\includegraphics[width=\linewidth]{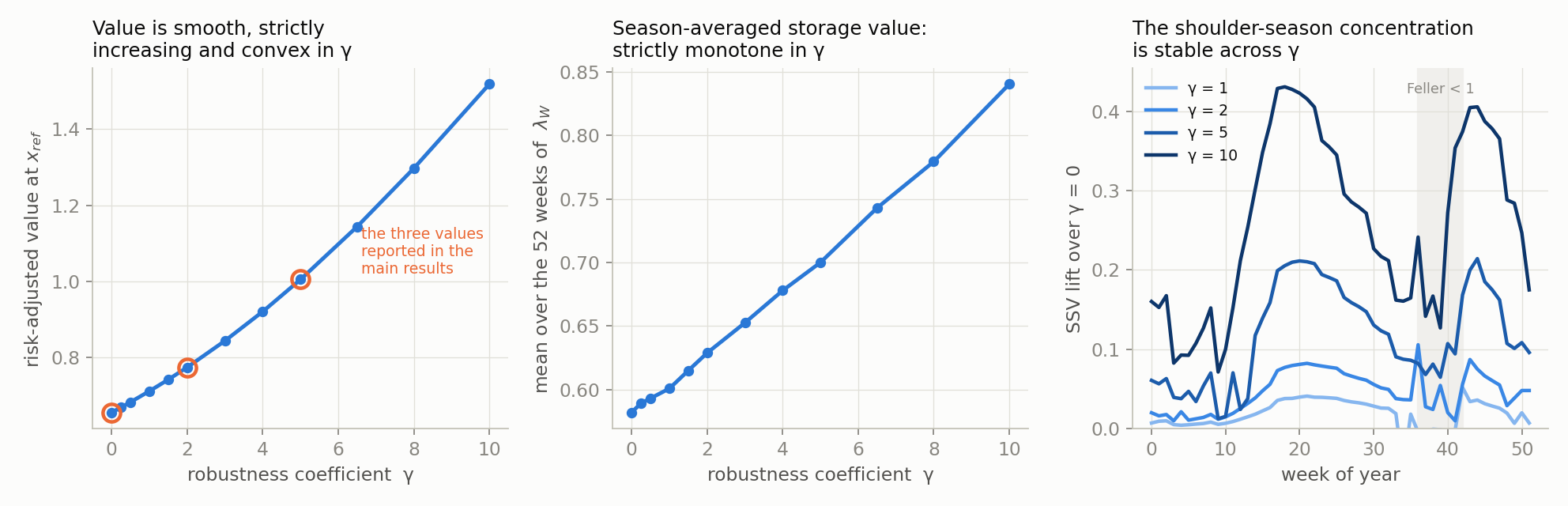}
\caption{Comparative statics in the robustness coefficient over a
twelve-point grid. Left: the risk-adjusted value at the reference state
\eqref{eq:refstate} is smooth, strictly increasing (as
Proposition~\ref{prop:Vmono} requires) and numerically convex; the circled
points are the three values reported in the main results. Centre: the
season-averaged seasonal storage value is likewise strictly monotone.
Right: the shoulder-season concentration of the robust uplift is invariant
in shape across $\gamma$ and scales only in amplitude.}
\label{fig:gammasweep}
\end{figure}

\subsection{Information timing: an exactly paired chain experiment}
\label{sec:timing}

The weekly benchmark uses a decision--hazard convention: the current weekly
inflow node $j_t$ is observed before release $u_t$ is selected. An operational
implementation may instead require a hazard--decision convention in which the
decision is selected using only $j_{t-1}$ and must remain feasible for every
current-node realization. To isolate the value of this information, both
conventions are solved on the \emph{identical} chain--same nodes, transition
matrices, costs, and storage grid. Their difference therefore contains no
change in state or temporal discretization.

The pointwise information advantage is
\begin{equation}
\Delta_{\mathrm{info}}(t,s,i)=V^{\mathrm{HD}}_t(s,i)-
\sum_jP_{t-1}[i,j]V^{\mathrm{DH}}_t(s,j),
\label{eq:definfo}
\end{equation}
which Proposition~\ref{prop:info} proves nonnegative. At the reference chain
state, $\Delta_{\mathrm{info}}=0.0458$. Over the complete
$52\times11\times81$ grid its mean is $0.0489$, median $0.0468$, $95$th
percentile $0.0589$, and maximum $0.0769$; the minimum remains positive
($0.0447$). At half-full storage the seasonal average over previous nodes
ranges only from $0.0456$ to $0.0519$, peaking in week 29. Because the HJB
absolute value level is more mesh-sensitive than its derivative
(Table~\ref{tab:hjbmesh}), we deliberately do \emph{not} express
$\Delta_{\mathrm{info}}$ as a percentage decomposition of the HJB--chain
value-level gap.

The timing distinction is operationally non-negligible. Simulating both grid
policies on the continuous SDE gives mean yearly costs $0.07318$ (DH) versus
$0.07740$ (HD) in the nominal world, a $5.8\%$ penalty for the honest timing
convention; under stress the corresponding costs are $0.10025$ and $0.10481$,
a $4.6\%$ penalty. These comparisons concern the information convention only
and do not alter the robustness ranking, because all robustness policies are
compared under a common timing convention.

\begin{figure}[t]
\centering
\includegraphics[width=\linewidth]{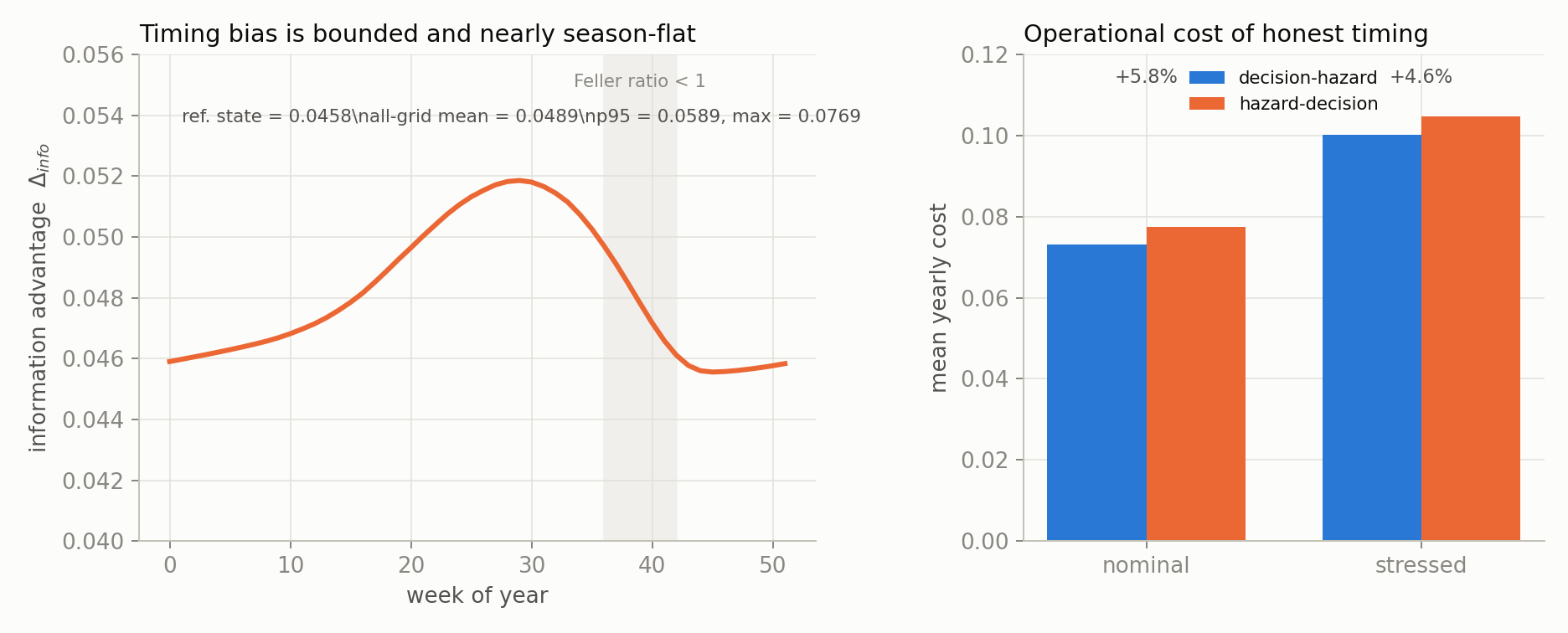}
\caption{Paired timing experiment on the identical weekly chain. Left: the
nonnegative information advantage $\Delta_{\rm info}$ is bounded and nearly
season-flat at half-full storage. Right: mean yearly cost of decision--hazard
and hazard--decision policies when both are simulated on the continuous SDE.}
\label{fig:timing}
\end{figure}

\subsection{A two-reservoir cascade}
\label{sec:cascade}

A two-reservoir toy cascade tests whether the cross-certification machinery
and the proposed higher-dimensional soft-cut extension survive beyond one
storage dimension. The experiment produces one positive and one negative
result.

The instance places two reservoirs in series on a shared seasonal driver: a
fraction $\alpha=0.6$ of the basin drains into the upper reservoir
($S_1^{\max}=0.25$), the remainder laterally into the lower one
($S_2^{\max}=0.20$), and water released or spilled upstream becomes inflow
downstream. Generation is $u_1+u_2$, so a unit entering the upper reservoir
passes both turbines while a unit entering laterally passes only the lower
one; mean energy availability is accordingly
$\alpha\cdot2+(1-\alpha)\cdot1=1.6$ per unit of mean inflow, and demand is
rescaled by $1.7$ so that the cascade is comparably stressed rather than
trivially over-supplied. The state is $(s_1,s_2,q)$, still small enough for
a grid dynamic program to serve as an oracle--the cascade analogue of the
HJB benchmark--at $25\times25$ storage points and a $17\times17$ release
grid.

\subsubsection*{Certification transfers} Cuts of the form
$a+b_1s_1'+b_2s_2'$, with $(b_1,b_2)$ read from the two storage-balance
duals of the stage linear program, reproduce \emph{both} water values of the
oracle across the annual cycle with correlation $0.9987$ upstream and
$0.9986$ downstream and a mean relative level gap of $6.2\%$ and $6.0\%$
respectively (Figure~\ref{fig:cascade}, left). The cross-certification
design is therefore not an artifact of one storage dimension. As an
independent physical check, the ratio $\lambda_1/\lambda_2$ of upstream to
downstream water value has annual mean $2.006$ and varies between $1.99$
and $2.02$ across all $52$ weeks (Figure~\ref{fig:cascade}, right),
matching the elementary prediction that upstream water is worth twice
downstream water because it generates twice. The solver recovers this
without being told.

\subsubsection*{The soft-cut acceleration claim does not survive testing} The
proposed role of the entropic phase in higher dimensions was to accelerate
the exact phase. Testing it required first correcting a design flaw. By
Proposition~\ref{prop:soft}(i) the entropic operator \emph{over}estimates,
so affine minorants extracted from a phase-A value function are minorants
of $V^\eta$, not of the exact value, and using them as cuts invalidates the
lower bound (Remark~\ref{rem:seedvalidity}). In the cascade this is
quantitatively visible: a converged coarse annealed phase A on a
$13\times13$ grid evaluates to $1.9866$ at the reference state against an
oracle value of $1.9119$, an overestimate of $3.9\%$. We therefore adopt a
strict separation--the entropic phase may determine \emph{where} the exact
solver looks, never \emph{what} it certifies--so that every reported bound
comes from exact HiGHS cuts and is valid by Lemma~\ref{lem:cut} and
Theorem~\ref{thm:posterior} regardless of the quality of the entropic
phase.

Under that rigorous protocol, guidance does not help. After $18$
iterations the exact-only lower bound is $0.448$ from a cold start and
$0.385$ when the forward pass is guided by the annealed phase; the
policies induced by the two exact cut families cost $0.773\pm0.082$ and
$0.848\pm0.083$ respectively on the true dynamics, a difference within one
standard error. We therefore withdraw the acceleration claim: in this
instance the annealed entropic phase confers no measurable advantage on the
exact cutting-plane phase, and Section~\ref{sec:anneal}'s
higher-dimensional soft-cut construction remains an architectural proposal
without numerical support. What the cascade does establish is the
certification claim in two storage dimensions and the correctness of the
two-dual cut construction.

The reader should note what this reclassification costs and what it does not.
The main single-reservoir policy experiments use the seeded working envelope
because cold-start exact SDDP is deliberately slow on the 52-stage periodic
benchmark. Those policy experiments are therefore reported as numerical
performance comparisons, not as lower-bound certificates. The certification
claim rests instead on Lemma~\ref{lem:cut}, Theorem~\ref{thm:posterior}, the
strict guide/store construction in Algorithm~\ref{alg:inflow}, and the
exact-cut-only gradient replication reported in Section~\ref{sec:certification}.
This separation prevents a useful numerical warm start from being confused
with a mathematical certificate.

\begin{figure}[t]
\centering
\includegraphics[width=\linewidth]{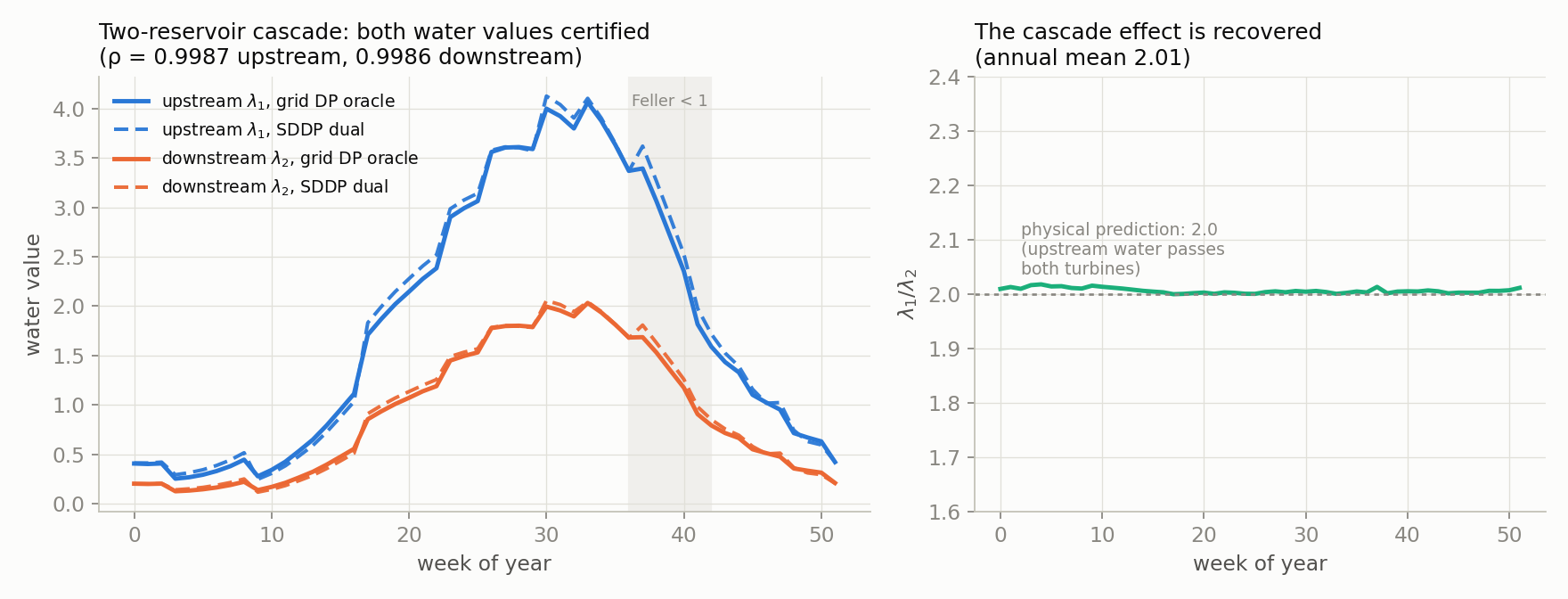}
\caption{Two-reservoir cascade. Left: upstream and downstream water values
from the grid dynamic-programming oracle (solid) and from the two
storage-balance duals of the stage linear program (dashed); correlations
$0.9987$ and $0.9986$ over the annual cycle. Right: the ratio
$\lambda_1/\lambda_2$ has annual mean $2.006$, recovering without
instruction the physical prediction that upstream water generates twice.}
\label{fig:cascade}
\end{figure}

\section{Discussion}
\label{sec:discussion}
\label{sec:domains}

The central result is methodological: the marginal value of stored water can
be computed by two independent numerical routes--as $-\partial_sV$ from a
state-constrained HJB equation and as the negative storage-balance dual from a
periodic stage LP--and the two seasonal profiles agree closely. The HJB
refinement study is important for interpreting that agreement. The derivative
stabilizes substantially faster than the absolute value level, which is why
the paper treats water-value cross-certification as the primary numerical
claim and avoids converting the coarse-grid value discrepancy into a formal
continuous/discrete error bound. In turn, the exact-cut-only SDDP replication
shows that the gradient agreement is not created by entropic seed cuts.

The robustness result has a transparent storage interpretation. The nested
entropic operator reweights successor nodes toward expensive continuation
states. Ahead of the dry season, the controller still has an intertemporal
hedge--retain water--so model distrust raises the marginal value of storage.
Once drought scarcity is realized, current shortage costs already dominate and
the marginal effect of further pessimism is smaller. This explains the
shoulder-season concentration in Figure~\ref{fig:robust}. The larger
out-of-sample experiment supports the economic interpretation: at
$\gamma=5$ the nominal policy pays a $4.6\%$ mean premium for an $11.2\%$
CVaR$_{90}$ reduction, whereas under the deliberately drier world the mean
difference is unresolved from zero and the tail reduction remains $11.5\%$.
These are benchmark-specific magnitudes, not universal hydropower constants.

The distinction between multiplier robustness and radius-constrained DRO is
also operationally important. InFlow solves the KL-penalized recursion at a
chosen $\gamma$. A field application should not interpret $\gamma$ as a
confidence radius. Instead, seasonal transition rows can be estimated on
bootstrap or rolling samples, an empirical KL radius can be constructed, and
the multiplier can then be selected through the radius--multiplier dual
relation or by validation on a reliability frontier \citep{lam2019,duchi2021}.
The dense $\gamma$ sweep shows that the risk-adjusted value is ordered and
smooth over the investigated range, but it does not eliminate the need for
statistical calibration.

The benchmark deliberately exposes rather than hides three limitations.
First, the one-reservoir model omits head effects, travel times, environmental
flows, unit commitment, and market detail. The two-reservoir cascade verifies
the dual construction in two storage dimensions and recovers the expected
upstream/downstream value ratio, but remains a toy. Second, the weekly
information convention has measurable value; the paired chain experiment
bounds it directly and shows a roughly five-percent policy-cost effect.
Third, control-space annealing is useful as a smooth homotopy and exploration
guide, but the cascade provides no evidence that it accelerates exact SDDP in
higher dimension. This negative result is why Algorithm~\ref{alg:inflow}
separates guidance from certification.

The numerical study is dimensionless by design. Flow is normalized by mean
annual inflow, storage by the corresponding annual volume, and cost by a
reference marginal thermal cost. The reported values are therefore algorithmic
and comparative, not forecasts, operating recommendations, or monetary water
values for an identified reservoir. This separation is consistent with the
recent water-value literature, which emphasizes uncertainty treatment,
physical fidelity, and transparent reporting of model assumptions
\citep{pavicevic2026water}.

Application to the Senegal river basin or another operational system is
therefore a subsequent validation stage, not an implicit claim of this
paper. That stage should calibrate plant-specific inflows, represent the
reservoir network and operating constraints, compare against the
incumbent planning workflow, and report physical and monetary water
values. For the planned MOSSHOOS application, the sequence is calibration
and validation at Manantali, followed by an OMVS cascade model. The present
contribution supplies the verified algorithmic component required before
that higher-dimensional application and is not redundant with it.

\section{Conclusion}
\label{sec:conclusion}

InFlow combines periodic stochastic dual dynamic programming, nested entropic
transition robustness, control-space Bellman annealing, and an independent
state-constrained HJB benchmark for seasonal storage. The mathematical roles
of relative entropy are kept distinct: the soft Bellman operator provides a
controlled approximation to the hard minimum, while the transition operator
is exactly a KL-penalized worst-case expectation whose Gibbs tilt generates
valid robust SDDP cuts. Water value is then available from two independent
objects, the HJB storage gradient and the HiGHS storage-balance dual.

The numerical evidence supports that architecture while also defining its
limits. The seasonal water-value profiles agree at correlation $0.995$, and an
exact-cut-only replication gives $0.996$; mesh refinement shows that the HJB
water-value derivative is stable even while the absolute value level converges
more slowly. A $3{,}000$-trajectory out-of-sample experiment confirms the
mean--tail trade-off of KL-penalized robustness, the twelve-point sweep verifies
monotonicity of the risk-adjusted value, the paired timing experiment bounds
the value of weekly information, and the two-reservoir cascade reproduces both
marginal water values and their physical ratio. Conversely, the cascade gives
no evidence that entropic initialization accelerates higher-dimensional exact
SDDP. We therefore retain annealing as a smoothing and exploration device and
reserve certification for exact LP-generated cuts.

The next step is empirical rather than conceptual: calibrate the stochastic
inflow and ambiguity model to an operational reservoir system, represent the
actual cascade and operating constraints, and compare against an incumbent
planning workflow. The companion Senegal river hydrology study
\citep{affognon2026wavelet} provides the empirical seasonality layer for that
programme; the present paper provides the optimization and verification layer.

\backmatter

\section*{Acknowledgements}

The authors acknowledge the MOSSHOOS project partners and the institutions
responsible for the hydrological records underpinning the companion empirical
study. The authors also thank colleagues who provided comments on the storage
model, stochastic-optimization formulation, hydrological interpretation,
solver verification, and reproducibility. No site-specific hydrological record
is used in the normalized benchmark analysed in this paper.

\section*{Statements and Declarations}

\textbf{Funding.} This research benefited from the support of the Fondation
Math\'ematique Jacques Hadamard (FMJH) through the Programme Gaspard Monge
pour l'Optimisation, la recherche op\'erationnelle et leurs interactions avec
les sciences des donn\'ees (PGMO).

\textbf{Competing interests.} The authors declare that they have no known
competing financial interests or personal relationships that could have
appeared to influence the work reported in this paper.

\textbf{Ethics approval and consent to participate.} Not applicable.

\textbf{Consent for publication.} Not applicable.

\textbf{Data availability.} The study uses a normalized synthetic
verification benchmark and does not use confidential or person-level
data. All generated numerical data required to reproduce the tables and
figures are included with the accompanying code archive.

\textbf{Code availability.} Python source code, fixed random seeds, and
instructions reproducing the HJB, SDDP, simulation, and figure workflows
are provided in the reproducibility archive accompanying the submission.

\textbf{Author contributions.} Steeven Belvinos Affognon: Conceptualization,
methodology, software, formal analysis, visualization, writing--original
draft. Babacar Mbaye Ndiaye: Supervision, hydropower interpretation,
stochastic-optimization framing, writing--review and editing. 
Cheikh M. F. Kebe: Energy-system interpretation, project supervision,
writing--review and editing. All authors reviewed and approved the final
manuscript. Pierre Mendy: Seasonality methodology, mathematical review, writing--review and editing.

\begin{appendices}

\section{Discretization details}
\label{app:chain}

\subsection*{HJB scheme} Explicit first-order upwind differences for both
advection terms, centered second differences for the degenerate
diffusion $\tfrac12\sigma_0^2 q\,\partial_{qq}V$ (which vanishes at
$q=0$; combined with inward drift $\kappa\theta(t)>0$ no boundary
condition is imposed there beyond one-sided upwinding); at $s=0$ release
is capped by inflow, at $s=S_{\max}$ spill is induced and the storage
drift clipped nonpositive, implementing the constrained viscosity
inequalities. CFL: the sum of Courant numbers is held below $0.9$
($\Delta t = 1/2080$ yr at the baseline $41\times41$ grid; refinement runs tighten $\Delta t$ with the mesh). The periodic fixed point
is a $e^{-\rho}$-contraction of the one-year backward map; a single
Richardson extrapolation after the transient settles reduces ten-plus
cycles to nine at $10^{-5}$ accuracy. (Repeated extrapolation is
unstable because the optimal control feeds back on $V$, making the map
only approximately affine; we document this as a warning.)

\subsection*{Inflow chain} State: weekly-mean inflow, binned into $11$
cells refined near the origin. Transition rows and node values are
estimated from $12{,}000$ SDE paths over two post-burn-in years,
smoothed toward the noncentral-$\chi^2$ CIR rows that are exact for the
frozen-$\theta$ weekly approximation, with $20$ pseudo-counts. The analytic alternative--
bin midpoints as node values--overstates annual water by $34\%$
(midpoints versus conditional means of a right-skewed density) and is
rejected; the empirical chain reproduces the true annual mean to
$0.5\%$. The degrees-of-freedom parameter of the weekly transition,
$d = 4\kappa\theta(t)/\sigma_0^2 = 2F(t)$, drops below $2$ in the
Feller-failing weeks, and the chain inherits the corresponding mass
concentration at the lowest node.

\subsection*{Simulation} Drift-implicit square-root scheme
\citep{alfonsi2005} with per-step guard
$q_k + (\kappa\theta - \tfrac12\sigma_0^2)h \ge 0$; full-truncation
Euler \citep{lord2010} on guard failure. Ten substeps per week. The
simulated state is nonnegative by construction in both branches.

\subsection*{Stage LP and duals} Problem \eqref{eq:stageLP} with $8$
thermal segments whose slopes are segment-midpoint marginal costs (the
piecewise-linear cost coincides with the quadratic at segment edges and
dominates it in between, preserving the lower-bound direction of
phase-A seeds). Duals from HiGHS via \texttt{scipy.optimize.linprog}
(\texttt{method="highs"}); the storage-balance dual was verified against
finite differences of the LP value to $4\times10^{-4}$.

\section{Rigorous foundations, verification, and comparative statics}
\label{app:math}

This appendix collects, for a mathematical readership, the precise
hypotheses under which the objects used in the main text are well defined,
the verification results available for the state-constrained continuous
problem and for its cutting-plane discretization, and proofs of the
comparative-statics facts invoked in Sections~\ref{sec:results}. Numbering
of the results in the main text is retained; new results are numbered in
sequence here.

\subsection{Admissible controls and well-posedness}
\label{app:wellposed}

Fix a filtered probability space $(\Omega,\mathcal F,(\mathcal F_t)_{t\ge0},
\Prob)$ carrying a standard Brownian motion $W$ and satisfying the usual
conditions. Because $q\mapsto\sigma_0\sqrt{q}$ is only $\tfrac12$-H\"older
at the origin, strong uniqueness for \eqref{eq:cir} does not follow from
the Lipschitz theory; it follows instead from the Yamada--Watanabe
criterion, which requires only $\tfrac12$-H\"older continuity of the
diffusion coefficient together with Lipschitz drift
\citep{yamada1971,karatzas1991}. Nonnegativity of $Q$ holds for every
bounded measurable $\theta(\cdot)\ge0$ and is not tied to the Feller
condition; what the Feller condition adds, when it holds uniformly, is
inaccessibility of the origin. Under Assumption~\ref{ass:standing} the
process $Q$ has moments of all orders on compacts, uniformly in $t$.

\begin{definition}[admissible controls]
\label{def:admissible}
Given $(t,s,q)\in[0,1]\times[0,S_{\max}]\times[0,\infty)$, the set
$\mathcal U(s,q)$ consists of pairs $(u,w)$ of progressively measurable
processes with $u_\tau\in[0,U_{\max}]$, $w_\tau\ge0$ for a.e.\ $\tau$, such
that the storage process defined by \eqref{eq:stor} satisfies the state
constraint $S_\tau\in[0,S_{\max}]$ for all $\tau\ge t$ almost surely, and
$\E\int_t^\infty e^{-\rho(\tau-t)}\ell(\tau,u_\tau,w_\tau)\dif\tau<\infty$.
\end{definition}

$\mathcal U(s,q)$ is nonempty for every admissible initial condition: the
policy $u_\tau=\min\{Q_\tau,U_{\max}\}$, $w_\tau=(Q_\tau-u_\tau)
\mathbf 1_{\{S_\tau=S_{\max}\}}$ keeps $S$ constant and has finite
discounted cost because $\ell$ has quadratic growth and $Q$ has second
moments bounded uniformly in time. Since $\ell\ge0$ and $\rho>0$, the value
function \eqref{eq:objective} is finite and satisfies
$0\le V\le \rho^{-1}\sup_t\ell(t,0,0)<\infty$. Joint convexity of $\ell$ in
$(u,w)$ and linearity of \eqref{eq:stor} in $(s,u,w)$ give convexity and
monotonicity of $s\mapsto V(t,s,q)$, as asserted in
Section~\ref{sec:model}; the argument is the standard one, by convex
combination of admissible controls for two initial storages.

\subsection{The state-constrained HJB equation}
\label{app:viscosity}

Because $\mathcal U(s,q)$ depends on the state through the constraint
$S\in[0,S_{\max}]$, the correct notion is Soner's \emph{constrained
viscosity solution}: a viscosity subsolution of \eqref{eq:hjb} on the
closure $[0,S_{\max}]\times[0,\infty)$ and a viscosity supersolution on the
interior \citep{soner1986,capuzzo1990}. The subsolution property on the
closed set encodes the requirement that the controller be able to keep the
state inside the constraint set, and replaces an explicit boundary
condition at $s\in\{0,S_{\max}\}$. At $q=0$ the diffusion coefficient
vanishes while the drift $\kappa\theta(t)$ points strictly inward, so the
origin is an inward-pointing degenerate boundary at which, again, no
boundary condition is imposed; this is the analytic content of the
one-sided upwind treatment described in Appendix~\ref{app:chain}.

Uniqueness among constrained viscosity solutions with polynomial growth
follows from the comparison principle for degenerate second-order
equations on domains satisfying an interior-cone condition, provided the
Hamiltonian is uniformly continuous in the required sense
\citep{crandall1992,soner1986}. Our domain $[0,S_{\max}]\times[0,\infty)$
satisfies the condition, and the discount $\rho>0$ supplies the strict
monotonicity in $V$ needed to close the doubling-of-variables argument.
Convergence of the monotone, stable and consistent finite-difference scheme
of Appendix~\ref{app:chain} to that unique solution is then the
Barles--Souganidis theorem \citep{barles1991}; upwinding is what supplies
monotonicity, and degenerate diffusion is admissible in that framework
\citep{bonnans2003}.

\subsection{Verification}
\label{app:verification}

Two verification results are relevant, and they answer different questions.
The first is the classical sufficient condition; it identifies a candidate
smooth solution with the value function and certifies a candidate control.
The second is an a posteriori certificate computable from the output of the
algorithm, and it is the one actually used in this paper.

\begin{theorem}[classical verification, sufficient form]
\label{thm:verification}
Let $W:[0,1]\times[0,S_{\max}]\times[0,\infty)\to\R$ be $1$-periodic in
$t$, of class $C^{1,1,2}$ on
$(0,1)\times(0,S_{\max})\times(0,\infty)$, continuous up to the boundary,
of polynomial growth in $q$, and suppose:
\begin{enumerate}
\item[(i)] $W$ satisfies \eqref{eq:hjb} pointwise in the interior, with the
constrained inequalities
$\partial_s W\le0$ at $s=S_{\max}$ and the release cap $u\le q$ enforced at
$s=0$;
\item[(ii)] there is a measurable selector
$(u^*,w^*)(t,s,q)$ attaining the minimum in \eqref{eq:hjb} for every
$(t,s,q)$;
\item[(iii)] the closed-loop system obtained by substituting
$(u^*,w^*)$ into \eqref{eq:stor}--\eqref{eq:cir} admits a solution
$(S^*,Q^*)$ with $S^*_\tau\in[0,S_{\max}]$ for all $\tau$, and
$(u^*,w^*)\in\mathcal U(s,q)$.
\end{enumerate}
Then $W=V$ on $[0,1]\times[0,S_{\max}]\times[0,\infty)$ and $(u^*,w^*)$ is
optimal.
\end{theorem}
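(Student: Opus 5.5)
The plan is the standard two-inequality argument: show $W\le J(u,w)$ for every admissible control, which gives $W\le V$, and then show equality is attained along the selector $(u^*,w^*)$, which gives $V\le W$ and optimality. For a fixed initial point $(t,s,q)$ and an arbitrary $(u,w)\in\mathcal U(s,q)$, apply It\^o's formula to $\tau\mapsto e^{-\rho(\tau-t)}W(\tau,S_\tau,Q_\tau)$. Periodicity of $W$ in $t$ lets us treat $W$ as defined for all $\tau\ge t$. The storage component has finite variation, so only $\partial_sW$ multiplies $(Q-u-w)\,\dif\tau$. The inflow contributes the generator $\kappa[\theta-q]\partial_q+\tfrac12\sigma_0^2q\,\partial_{qq}$ together with a martingale term $\sigma_0\sqrt{Q}\,\partial_qW\,\dif W$. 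Up to a localizing stopping time $\tau_n$, this gives
\[
e^{-\rho(\tau_n-t)}W(\tau_n,\cdot)-W(t,s,q)=\int_t^{\tau_n}e^{-\rho(r-t)}\big(-\rho W+\partial_tW+(Q-u-w)\partial_sW+\mathcal L_QW\big)\dif r+M_{\tau_n}.
\]
By (i) the integrand is at least $-\ell(r,u_r,w_r)$ for every admissible $(u_r,w_r)$, with equality at $(u^*,w^*)$. Taking expectations therefore yields $W(t,s,q)\le \E\int_t^{\tau_n}e^{-\rho(r-t)}\ell\,\dif r+\E[e^{-\rho(\tau_n-t)}W(\tau_n,\cdot)]$.

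The localization must handle three things. First, $\tau_n$ is the exit time from $\{|Q|\le n\}$, plus times of approach to the set where the regularity of $W$ fails. Second, the boundary pieces $s\in\{0,S_{\max}\}$ and $q=0$ are excluded from $C^{1,1,2}$ regularity. I would deal with $s$ at the boundary using the constrained inequalities in (i). At $S=S_{\max}$, admissibility forces $Q-u-w\le0$, and $\partial_sW\le0$ gives the correct sign, so the Hamiltonian inequality persists after one-sided differentiation. At $s=0$, admissibility forces $u\le Q$, which is exactly the constraint built into the minimization there. For $q=0$, I would use that the Lebesgue time $Q$ spends at $0$ is null: the drift $\kappa\theta>0$ points inward, and the occupation-times formula with $\sigma^2(0)=0$ still gives zero Lebesgue time because $\kappa\theta>0$. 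I would then approximate by $W$ evaluated at $(S,Q\vee\epsilon)$, or simply apply the It\^o–Tanaka/continuity argument and let $\epsilon\downarrow0$.

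Next, let $n\to\infty$. Monotone convergence handles the running cost since $\ell\ge0$. For the terminal term, polynomial growth of $W$ in $q$ combined with uniform-in-time moments of $Q$ (Appendix~\ref{app:wellposed}) gives $\E[e^{-\rho(\tau_n-t)}|W(\tau_n,\cdot)|]\to0$ as $\tau_n\to\infty$, and uniform integrability handles the limit. The stochastic integral is a true martingale up to $\tau_n$ because the integrand is bounded there. This yields $W\le J(u,w)$ for every admissible control, hence $W\le V$.

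For the reverse inequality, repeat the argument with $(u^*,w^*)$ from (ii). By (iii) the closed-loop solution exists, stays in $[0,S_{\max}]$, and is admissible, so every inequality above becomes an equality and $W(t,s,q)=J(u^*,w^*)\ge V$. Combining the two bounds gives $W=V$ with $(u^*,w^*)$ optimal.

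The main obstacle is the boundary step: justifying It\^o's formula when $W$ is only $C^{1,1,2}$ in the open interior while the state genuinely sits on $s=S_{\max}$, on $s=0$ (spill and empty reservoir are both typical), and possibly at $q=0$ in the Feller-failing weeks. I would first try a mollification/inner-approximation argument, replacing $W$ by $W(\tau,\pi_\epsilon(s),q\vee\epsilon)$ with $\pi_\epsilon$ the projection onto $[\epsilon,S_{\max}-\epsilon]$. I would use continuity up to the boundary, together with the sign conditions in (i), to show that the error terms vanish or have the favourable sign as $\epsilon\downarrow0$.
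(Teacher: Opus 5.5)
Your skeleton matches the paper's proof step for step:
\begin{itemize}
\item It\^o's formula for $e^{-\rho(\tau-t)}W(\tau,S_\tau,Q_\tau)$, with no second-order $s$-terms because $S$ has finite variation;
\item the bracket bounded below by $-\ell$ through (i), with equality at the selector of (ii);
\item a localization that makes the stochastic integral a true martingale;
\item monotone convergence for the running cost, and transversality from polynomial growth plus uniform moments of $Q$;
\item equality along the admissible closed loop of (iii).
\end{itemize}
You depart from it only at the boundary, and you are right that a step is missing there. The paper stops only on $\{Q\ge n\}\cup\{Q\le 1/n\}$ and asserts $\tau_n\to\infty$ a.s. This fails whenever $Q$ reaches $0$ in finite time, since then $\tau_n\le T_0:=\inf\{\tau>t:Q_\tau=0\}$ for every $n$. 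That is exactly the dry-season regime of Section~\ref{sec:feller}. The paper also applies It\^o's formula while $S$ may sit on $\{0,S_{\max}\}$, where $W$ is only assumed continuous. Your own first localization (stopping on approach to the irregular set) has the same defect, because the state can reach $s\in\{0,S_{\max}\}$ or $q=0$ in finite time. Your $\epsilon$-approximation therefore has to carry the whole boundary step.

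As sketched, that approximation does not close under the stated hypotheses. Write $\mathcal L_Q=\kappa[\theta-q]\partial_q+\tfrac12\sigma_0^2q\,\partial_{qq}$. It\^o--Tanaka applied to $W(\tau,\pi_\epsilon(S_\tau),Q_\tau\vee\epsilon)$ produces three error terms.
\begin{itemize}
\item On $\{S_r\notin(\epsilon,S_{\max}-\epsilon)\}$ the projected function does not see the storage drift. The HJB at $(r,\pi_\epsilon(S_r),Q_r\vee\epsilon)$ then leaves $-(Q_r-u_r-w_r)\,\partial_sW(r,\pi_\epsilon(S_r),Q_r\vee\epsilon)$, which has indefinite sign.
\item On $\{Q_r\le\epsilon\}$ the generator is absent, leaving $-\mathcal L_QW(r,\pi_\epsilon(S_r),\epsilon)$ plus a term $(Q_r-\epsilon)\,\partial_sW$.
\item There is a local-time term $\tfrac12\int e^{-\rho(r-t)}\partial_qW(r,\pi_\epsilon(S_r),\epsilon)\,\dif L^\epsilon_r(Q)$. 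For a candidate with $\partial_qW\le0$ it sits on the wrong side of $W\le J$, so it cannot simply be dropped.
\end{itemize}
The facts that should kill these terms as $\epsilon\downarrow0$ are available. First, $\dot S_r=0$ for a.e.\ $r$ with $S_r\in\{0,S_{\max}\}$, because $S$ is absolutely continuous. Second, $Q$ spends zero Lebesgue time at $0$. Third, $L^\epsilon\to L^0$ by right-continuity in the level, and $L^0=0$ because $\dif\langle Q\rangle_r=\sigma_0^2Q_r\,\dif r$.

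However, passing to the limit by dominated convergence requires $\partial_sW$, $\partial_qW$ and $\mathcal L_QW$ to stay bounded near $s\in\{0,S_{\max}\}$ and $q=0$. ``Continuous up to the boundary'' gives neither that nor the one-sided derivatives your boundary argument invokes. The sign conditions in (i) do not help either: they control neither the size of $\partial_sW$ inside the layer nor the local-time term.

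You need to strengthen the hypothesis, for instance to $W\in C^{1,1,2}$ up to the boundary, or to bounded derivatives there. With that, your argument goes through. The sign conditions are then not even needed for $W\le J$: on the contact set $\dot S=0$ a.e., so the HJB identity extended by continuity already bounds the bracket below by $-\ell$ for every control with $u+w=Q$.
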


\begin{proof}
Fix $(t,s,q)$ and an arbitrary $(u,w)\in\mathcal U(s,q)$ with state
$(S,Q)$. Let $\tau_n=\inf\{\tau>t: Q_\tau\ge n \text{ or } Q_\tau\le 1/n\}
\wedge (t+n)$, a localizing sequence. Applying It\^o's formula to
$\tau\mapsto e^{-\rho(\tau-t)}W(\tau,S_\tau,Q_\tau)$ on $[t,\tau_n]$ and
using that $S$ has finite variation (so no second-order term in $s$ and no
cross term appears),
\begin{multline*}
e^{-\rho(\tau_n-t)}W(\tau_n,S_{\tau_n},Q_{\tau_n}) - W(t,s,q) \\
= \int_t^{\tau_n} e^{-\rho(\tau-t)}\Big[
-\rho W + \partial_t W + (Q_\tau-u_\tau-w_\tau)\partial_s W
+ \kappa(\theta(\tau)-Q_\tau)\partial_q W
+ \tfrac12\sigma_0^2 Q_\tau \partial_{qq}W \Big]\dif\tau \\
+ \int_t^{\tau_n} e^{-\rho(\tau-t)}\sigma_0\sqrt{Q_\tau}\,\partial_q W\,\dif W_\tau .
\end{multline*}
By (i) the bracket is $\ge -\ell(\tau,u_\tau,w_\tau)$ for every admissible
$(u,w)$, with equality when $(u,w)=(u^*,w^*)$ by (ii). The stochastic
integral is a true martingale on $[t,\tau_n]$ because $\partial_q W$ is
continuous and $Q$ is bounded on the stopped interval. Taking expectations,
\[
W(t,s,q) \;\le\; \E\!\left[\int_t^{\tau_n} e^{-\rho(\tau-t)}
\ell(\tau,u_\tau,w_\tau)\dif\tau
+ e^{-\rho(\tau_n-t)}W(\tau_n,S_{\tau_n},Q_{\tau_n})\right].
\]
Let $n\to\infty$. Then $\tau_n\to\infty$ a.s.; the polynomial growth of $W$
in $q$, the uniform-in-time moment bounds on $Q$, and $\rho>0$ give
$\E[e^{-\rho(\tau_n-t)}W(\tau_n,S_{\tau_n},Q_{\tau_n})]\to0$ (transversality),
while monotone convergence handles the integral since $\ell\ge0$. Hence
$W(t,s,q)\le J(t,s,q;u,w)$ for every admissible control, so
$W\le V$; and equality holds along $(u^*,w^*)$ by (iii), so $W=V$ and
$(u^*,w^*)$ is optimal.
\end{proof}

\begin{remark}[why Theorem~\ref{thm:verification} is not directly applicable
here]
\label{rem:notsmooth}
Hypothesis (i) requires $C^{1,1,2}$ regularity, which the value function of
the present problem does not possess: $s\mapsto V$ has a kink where the
release constraint becomes active, and $V$ is only semiconcave near
$s\in\{0,S_{\max}\}$. Theorem~\ref{thm:verification} is therefore a
sufficient condition that our $V$ fails to meet, and the identification of
the numerical limit with $V$ proceeds instead through the constrained
viscosity characterization of Appendix~\ref{app:viscosity} plus
Barles--Souganidis convergence. We state
Theorem~\ref{thm:verification} because it is the result a reader will
expect at this point, and because it is exactly what licenses the
threshold policy \eqref{eq:threshold} wherever $V$ \emph{is} smooth -- which
is the interior region where the numerical water values of
Section~\ref{sec:results} are read off. In the viscosity setting the
corresponding statement is the comparison principle: any constrained
viscosity subsolution is dominated by any supersolution, so the numerical
scheme's limit is $V$ without any smoothness assumption.
\end{remark}

The result actually used to certify computed policies is the following
sandwich, which requires no regularity at all and is computable from
quantities the algorithm already produces.

\begin{theorem}[a posteriori verification certificate]
\label{thm:posterior}
Let $\mathcal T$ denote the exact periodic Bellman operator of
\eqref{eq:bellman} and $\mathcal T^{\mathrm{cut}}$ the operator obtained by
replacing each cost-to-go $V_{t+1,j}$ by the upper envelope of a finite
family of cuts, every one of which is valid in the sense of
Lemma~\ref{lem:cut}. Let $\underline V$ be the periodic fixed point of
$\mathcal T^{\mathrm{cut}}$ and let $\pi$ be any admissible policy with
expected discounted cost $\overline V(\pi)$. Then
\[
\underline V \;\le\; V^{\mathrm{chain}} \;\le\; \overline V(\pi),
\]
so $\overline V(\pi)-\underline V$ is a computable optimality certificate:
$\pi$ is $\varepsilon$-optimal for $\varepsilon=\overline V(\pi)-\underline V$.
\end{theorem}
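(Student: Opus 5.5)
The argument has two independent inequalities. Both rest on monotonicity and contraction of the periodic Bellman operators, with Lemma~\ref{lem:cut} supplying the comparison $\mathcal T^{\mathrm{cut}}\le\mathcal T$.

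\emph{Lower bound.} I would first prove, for every bounded periodic family $V$, that $\mathcal T^{\mathrm{cut}}V\le\mathcal T V$ pointwise. This requires care about what ``valid'' means when the cut family is fixed while the operator is iterated. The clean formulation interprets validity relative to the exact chain value: each cut used at $(t,j)$ is an affine minorant of $W_{t,j}=\rho_\gamma^{t,j}(V^{\mathrm{chain}}_{t+1,\cdot})$. Lemma~\ref{lem:cut} delivers exactly this whenever the successor minorants it tilts are themselves minorants of $V^{\mathrm{chain}}_{t+1,\cdot}$.

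Two facts then combine. First, $\mathcal T^{\mathrm{cut}}$ is monotone and a $\delta$-contraction in the sup norm, because the stage LP \eqref{eq:stageLP} depends on the cost-to-go only through the term $\delta\varphi$. Second, $\mathcal T^{\mathrm{cut}}V^{\mathrm{chain}}\le\mathcal T V^{\mathrm{chain}}=V^{\mathrm{chain}}$, since the max of the cuts lies below $W_{t,j}$ feasibly for every $s'$. Iterating $\mathcal T^{\mathrm{cut}}$ from $V^{\mathrm{chain}}$ therefore produces a nonincreasing sequence that converges, by Banach's fixed-point theorem over the periodic year map, to $\underline V$. This gives $\underline V\le V^{\mathrm{chain}}$.

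\emph{Main obstacle.} The hard step is justifying that every cut in the store is a minorant of the \emph{exact} $W_{t,j}$, not merely of the current approximation. I would handle it by induction over the order in which cuts are added. The empty store trivially has this property. Suppose every existing cut is valid. Then the LP values $v_{j'}$ solved with the current store are lower bounds $\underline V_{t+1,j'}\le V^{\mathrm{chain}}_{t+1,j'}$; this uses the same monotonicity argument applied to the single stage map. The duals $\beta_{j'}$ are subgradients of the convex LP value $\underline V_{t+1,j'}$, so $v_{j'}+\beta_{j'}(\cdot-\hat s)\le\underline V_{t+1,j'}\le V^{\mathrm{chain}}_{t+1,j'}$. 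Lemma~\ref{lem:cut} then shows the new tilted cut is valid. Periodic wrapping (stage $51$ to $0$) creates no difficulty, because the induction runs over insertion time rather than stage index.

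\emph{Upper bound.} This is the definition of optimality. $V^{\mathrm{chain}}$ is the infimum, over admissible chain policies, of the risk-adjusted discounted cost. For a fixed policy $\pi$, the policy-evaluation operator $\mathcal T^\pi$ (no minimization, with the same nested $\rho_\gamma$) is monotone and $\delta$-contracting with fixed point $\overline V(\pi)$, and it satisfies $\mathcal T V\le\mathcal T^\pi V$. Iterating from $\overline V(\pi)$ therefore gives $V^{\mathrm{chain}}\le\overline V(\pi)$.

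Subtracting the two bounds, $\overline V(\pi)-V^{\mathrm{chain}}\le\overline V(\pi)-\underline V$, which is the $\varepsilon$-optimality claim.
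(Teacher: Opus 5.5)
Your proof is correct, and your lower bound takes a different route from the paper's. The paper asserts the operator inequality $\mathcal T^{\mathrm{cut}}\le\mathcal T$ on all bounded functions and iterates both operators from the common start $0$. You instead compare them only at the exact fixed point, $\mathcal T^{\mathrm{cut}}V^{\mathrm{chain}}\le\mathcal T V^{\mathrm{chain}}=V^{\mathrm{chain}}$, and then iterate $\mathcal T^{\mathrm{cut}}$ downward from $V^{\mathrm{chain}}$. The paper's argument is shorter, but yours uses exactly what validity provides: each cut minorizes $W_{t,j}=\rho^{t,j}_\gamma(V^{\mathrm{chain}}_{t+1,\cdot})$. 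A fixed cut family need not minorize $\rho_\gamma(V_{t+1,\cdot})$ for other arguments $V$; for $V=0$, any cut that is positive somewhere fails. So the paper's finite-$k$ comparison $(\mathcal T^{\mathrm{cut}})^k0\le\mathcal T^k0$ is not implied by validity, whereas your single comparison is. In fact, if every stage carries a fixed envelope, including the wrap from $51$ to $0$, then $\mathcal T^{\mathrm{cut}}$ ignores its argument. Monotonicity and contraction are then trivial, and your bound is the one line $\underline V=\mathcal T^{\mathrm{cut}}V^{\mathrm{chain}}\le V^{\mathrm{chain}}$. Your insertion-time induction has no counterpart in the paper, which takes validity as a hypothesis. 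It usefully shows that cuts built from the approximate store, not from exact values, satisfy the hypothesis of Lemma~\ref{lem:cut} across the periodic wrap. Its base case needs one repair: with an empty store the stage LP is unbounded in $\varphi$. Seed the store with the trivial cut $\varphi\ge0$, which is valid because stage costs are nonnegative and hence $W_{t,j}\ge0$. For the upper bound, the paper simply invokes the infimum. Your evaluation operator $\mathcal T^\pi$ makes explicit that for $\gamma>0$ the quantity $\overline V(\pi)$ must be the nested $\rho_\gamma$-evaluation of $\pi$, not its plain expected discounted cost, for which the inequality can fail. As written, that argument covers periodic feedback policies; history-dependent ones need the usual dynamic-programming argument.
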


\begin{proof}
Validity of the cuts gives $\mathcal T^{\mathrm{cut}}\le\mathcal T$
pointwise, and both operators are monotone $\delta$-contractions on the
space of bounded functions on the (finite) state set with the sup norm.
Iterating both from the common initial function $0$ and using monotonicity,
$(\mathcal T^{\mathrm{cut}})^k 0\le \mathcal T^k 0$ for every $k$; letting
$k\to\infty$ and using Banach's fixed point theorem in both cases gives
$\underline V\le V^{\mathrm{chain}}$. The right inequality is immediate
because $V^{\mathrm{chain}}$ is an infimum over admissible policies.
\end{proof}

\begin{remark}[what the certificate does and does not cover]
\label{rem:certificate}
Theorem~\ref{thm:posterior} certifies against the \emph{chain} value
$V^{\mathrm{chain}}$, i.e.\ the discrete-time, finite-node model actually
solved. It says nothing about the distance between $V^{\mathrm{chain}}$ and
the continuous-time $V$; that distance is what the HJB benchmark of
Section~\ref{sec:results} measures, and
Section~\ref{sec:timing} decomposes it into an information component and a
discretization component. The two instruments are complementary:
Theorem~\ref{thm:posterior} is an internal certificate,
the HJB comparison is an external one. This is the precise sense in which
the architecture is ``certified rather than merely tested''.
\end{remark}

\begin{remark}[the entropic phase is not covered by
Theorem~\ref{thm:posterior}]
\label{rem:seedvalidity}
Proposition~\ref{prop:soft}(i) states $\mathcal T\le\mathcal T^\eta$: the
entropic operator \emph{over}estimates. Affine minorants extracted from a
phase-A value function are therefore minorants of $V^\eta$, not
necessarily of $V^{\mathrm{chain}}$, and the discrepancy is bounded by
$\eta\log N_u/(1-\delta)$ plus the phase-A discretization error, of which
only the first term is controlled a priori. Consequently the entropic
phase must be used for \emph{guidance} -- choosing the trial states at
which exact cuts are generated -- and not as a source of certified cuts,
if the bound of Theorem~\ref{thm:posterior} is to remain valid. This is the
design adopted in Section~\ref{sec:cascade}, and the single-reservoir
seeded bounds reported in Section~\ref{sec:results} are accompanied by the
a posteriori check described there.
\end{remark}

\subsection{Comparative statics in the robustness coefficient}
\label{app:gamma}

Write $\rho_\gamma(v)=\gamma^{-1}\log\E_P[e^{\gamma v}]$ for a random
variable $v$ taking finitely many values, and $\rho_0(v)=\E_P[v]$.

\begin{proposition}[monotonicity and expansion of the entropic risk]
\label{prop:rhomono}
For each fixed $v$, the map $\gamma\mapsto\rho_\gamma(v)$ is nondecreasing
and continuous on $[0,\infty)$, with
\[
\rho_\gamma(v)\;=\;\E_P[v]+\frac{\gamma}{2}\operatorname{Var}_P(v)+O(\gamma^2),
\qquad \gamma\downarrow0 .
\]
\end{proposition}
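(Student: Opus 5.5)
The plan is to work with the cumulant generating function $K(\gamma)=\log\E_P[e^{\gamma v}]$. Since $v$ takes finitely many values, $K$ is finite, real-analytic on all of $\R$, and satisfies $K(0)=0$. For $\gamma>0$ we have $\rho_\gamma(v)=K(\gamma)/\gamma$, and the three claims reduce to elementary properties of $K$.

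\textbf{Monotonicity.} I would use convexity of $K$. Its second derivative is the variance of $v$ under the Gibbs tilt \eqref{eq:tilt} at parameter $\gamma$, hence $K''\ge0$. (Alternatively, convexity follows from H\"older's inequality.)

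For a convex function with $K(0)=0$, the chord slope $\gamma\mapsto (K(\gamma)-K(0))/\gamma$ is nondecreasing on $(0,\infty)$. This is the standard three-slope lemma, and it gives monotonicity on $(0,\infty)$ directly.

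An alternative route would use the variational formula \eqref{eq:dv}. Write $\rho_\gamma(v)=\sup_Q\{\E_Q v-\gamma^{-1}\KL{Q}{P}\}$. The penalty $\gamma^{-1}\KL{Q}{P}$ is nonincreasing in $\gamma$ because $\KL{Q}{P}\ge0$. So each term in the supremum is nondecreasing in $\gamma$, and so is the supremum. I would mention this as the conceptual reason and keep the convexity argument as the proof.

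\textbf{Continuity.} On $(0,\infty)$ continuity is immediate, since $K$ is analytic and $\gamma\mapsto 1/\gamma$ is continuous. At $\gamma=0$ it reduces to the limit $K(\gamma)/\gamma\to K'(0)=\E_P[v]=\rho_0(v)$, which is the first term of the expansion below. Monotonicity then extends to $[0,\infty)$: $\rho_0$ is the limit from the right of a nondecreasing function, so it lies below every $\rho_\gamma$ with $\gamma>0$.

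\textbf{Expansion.} I would Taylor-expand $K$ at $0$ to third order:
\[
K(\gamma)=\gamma K'(0)+\tfrac{\gamma^2}{2}K''(0)+O(\gamma^3),
\qquad K'(0)=\E_P[v],\quad K''(0)=\operatorname{Var}_P(v).
\]
The remainder is uniform for $\gamma\in[0,1]$, because $K'''$ is bounded on compacts when $v$ is finitely supported. Dividing by $\gamma$ gives the stated expansion with an $O(\gamma^2)$ error.

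The only step needing care is the identification of $K''(\gamma)$ as the tilted variance, which drives the convexity. Concretely, $K'=\E_{\tilde w}[v]$ and $K''=\E_{\tilde w}[v^2]-(\E_{\tilde w}[v])^2$, where $\tilde w\propto P e^{\gamma v}$. Both follow by differentiating the finite sum under the logarithm. Nothing else is a genuine obstacle.
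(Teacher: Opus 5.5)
Your proof is correct and follows essentially the paper's argument: both write $\rho_\gamma=K(\gamma)/\gamma$ with $K$ the cumulant generating function, derive monotonicity from convexity of $K$ together with $K(0)=0$, and obtain continuity at $0$ and the variance expansion from the second-order Taylor expansion of $K$. The only difference is cosmetic. The paper uses the tangent-line inequality $K(0)\ge K(\gamma)-\gamma K'(\gamma)$ to show $\rho_\gamma'=(\gamma K'(\gamma)-K(\gamma))/\gamma^2\ge0$, whereas you invoke the monotonicity of chord slopes directly and also spell out the extension of monotonicity to $\gamma=0$ through the right limit.
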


\begin{proof}
Let $K(\gamma)=\log\E_P[e^{\gamma v}]$ be the cumulant generating function,
so $\rho_\gamma=K(\gamma)/\gamma$ and $K(0)=0$. $K$ is convex and smooth on
$\R$ (finitely many values), hence $K(0)\ge K(\gamma)+K'(\gamma)(0-\gamma)$,
i.e.\ $\gamma K'(\gamma)-K(\gamma)\ge0$. Therefore
$\rho_\gamma' = \big(\gamma K'(\gamma)-K(\gamma)\big)/\gamma^2 \ge 0$.
Continuity at $0$ and the expansion follow from
$K(\gamma)=\gamma\E[v]+\tfrac{\gamma^2}{2}\operatorname{Var}(v)+O(\gamma^3)$.
\end{proof}

\begin{proposition}[monotonicity of the value in $\gamma$]
\label{prop:Vmono}
Let $V^{(\gamma)}$ be the periodic fixed point of the Bellman operator
\eqref{eq:bellman} with $\mathcal R=\rho_\gamma$. Then
$\gamma\mapsto V^{(\gamma)}$ is nondecreasing pointwise.
\end{proposition}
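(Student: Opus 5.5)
The plan is a standard monotone-operator comparison. Fix $0\le\gamma_1\le\gamma_2$ and write $\mathcal T_\gamma$ for the Bellman operator \eqref{eq:bellman} with aggregation $\mathcal R_{t,j}=\rho^{t,j}_\gamma$; for $\gamma=0$ this is the risk-neutral operator.

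The argument has three steps.

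\begin{enumerate}
\item[(a)] \emph{Pointwise ordering of the operators.} Fix a bounded (periodic, stagewise) function $V$ and $(t,j,s)$. For every control $u$, the continuation vector $v=(V_{t+1,j'}(s'))_{j'}$ takes finitely many values. Proposition~\ref{prop:rhomono} then gives $\rho_{\gamma_1}^{t,j}(v)\le\rho_{\gamma_2}^{t,j}(v)$. Multiplying by $\delta>0$, adding $c_{t,j}(s,u)$ and minimizing over the same feasible set $\mathcal U_{t,j}(s)$ gives
\[
\mathcal T_{\gamma_1}V\le\mathcal T_{\gamma_2}V
\]
pointwise. The feasible set does not depend on $\gamma$.
\item[(b)] \emph{Monotonicity and contraction of each $\mathcal T_\gamma$.} Monotonicity comes from monotonicity of $\rho_\gamma$: $v\le v'$ componentwise implies $\E_P e^{\gamma v}\le\E_P e^{\gamma v'}$. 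It is preserved under adding costs and taking minima. The $\delta$-contraction in the sup norm comes from translation equivariance, $\rho_\gamma(v+c)=\rho_\gamma(v)+c$, combined with monotonicity. Together these give $|\rho_\gamma(v)-\rho_\gamma(v')|\le\|v-v'\|_\infty$, so each $\mathcal T_\gamma$ has a unique bounded periodic fixed point $V^{(\gamma)}$ by Banach's theorem. Boundedness needs a short remark: costs are nonnegative and bounded on the finite node set and compact storage interval, and $\rho_\gamma$ of a bounded vector lies between its min and max.
\item[(c)] \emph{Induction from a common starting point.} Start both iterations from $V_0\equiv0$ and show by induction that $\mathcal T_{\gamma_1}^kV_0\le\mathcal T_{\gamma_2}^kV_0$ for all $k$. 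Assuming it holds at $k$, we get
\[
\mathcal T_{\gamma_1}^{k+1}V_0\le\mathcal T_{\gamma_1}\big(\mathcal T_{\gamma_2}^kV_0\big)\le\mathcal T_{\gamma_2}^{k+1}V_0 ,
\]
using monotonicity of $\mathcal T_{\gamma_1}$ from (b) and then the ordering from (a). Letting $k\to\infty$ gives $V^{(\gamma_1)}\le V^{(\gamma_2)}$. Alternatively, one can argue directly: $V^{(\gamma_1)}=\mathcal T_{\gamma_1}V^{(\gamma_1)}\le\mathcal T_{\gamma_2}V^{(\gamma_1)}$, and iterating $\mathcal T_{\gamma_2}$ preserves this inequality and converges to $V^{(\gamma_2)}$.
\end{enumerate}

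The main care point is (b) for $\gamma>0$ on the periodic stage structure. The operator acts on the 52-tuple $(V_t)$ with stage 51 wrapping to stage 0, so the contraction is per-stage $\delta$. That is still a contraction on the product space, or $\delta^{52}$ over the annual map. A second point is well-definedness of $\rho_\gamma$ on the continuation vectors. Since $P_t[j,\cdot]$ has finite support, the exponential moments always exist, and no integrability issue arises, unlike in the continuous case.

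Strict monotonicity, which the numerical section observes, is not claimed here. It would require nondegenerate continuation values, because $\rho_\gamma$ is strictly increasing in $\gamma$ iff $v$ is nonconstant under $P$.
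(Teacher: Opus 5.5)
Your proof is correct and follows the same route as the paper's: order the Bellman operators pointwise via Proposition~\ref{prop:rhomono}, use that both are monotone $\delta$-contractions, iterate from the common initial function $0$, and pass to the limit. Your step (b), the boundedness remark, and the alternative fixed-point argument in (c) spell out details that the paper's one-line proof asserts without writing out.
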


\begin{proof}
By Proposition~\ref{prop:rhomono}, $\gamma_1\le\gamma_2$ implies
$\rho_{\gamma_1}\le\rho_{\gamma_2}$ pointwise, hence the corresponding
Bellman operators satisfy $\mathcal T^{(\gamma_1)}\le\mathcal T^{(\gamma_2)}$
pointwise; both are monotone $\delta$-contractions. Iterating from the
common initial function $0$ preserves the inequality at every step, and
passing to the limit gives $V^{(\gamma_1)}\le V^{(\gamma_2)}$.
\end{proof}

\begin{remark}[monotonicity of the value does not transfer to its gradient]
\label{rem:ssvmono}
Proposition~\ref{prop:Vmono} concerns the value, not the storage
derivative $\ssv=-\partial_s V^{(\gamma)}$. No monotonicity of $\ssv$ in
$\gamma$ is claimed or should be expected: a family of convex functions
increasing pointwise need not have monotone slopes. The numerical
comparative statics of Section~\ref{sec:gammasweep} are consistent with
this: the value is strictly increasing and the season-averaged storage
value is strictly increasing, while individual weeks exhibit small
reversals, roughly half of which lie inside the seed-to-seed variability of
the linear-programming dual.
\end{remark}

\subsection{The information ordering of the timing conventions}
\label{app:timing}

Let $V^{\mathrm{DH}}_t(s,j)$ and $V^{\mathrm{HD}}_t(s,i)$ denote the
periodic fixed points of the decision--hazard and hazard--decision
recursions defined in Section~\ref{sec:timing}, on the same chain, with the
same costs and the same storage set.

\begin{proposition}[the information advantage is nonnegative]
\label{prop:info}
For every $t$, every $s\in[0,S_{\max}]$ and every node $i$,
\[
\Delta_{\mathrm{info}}(t,s,i)\;:=\;V^{\mathrm{HD}}_t(s,i)
-\sum_{j}P_{t-1}[i,j]\,V^{\mathrm{DH}}_t(s,j)\;\ge\;0 .
\]
\end{proposition}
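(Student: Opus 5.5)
We prove the inequality by comparing the two recursions on the identical chain, through a monotone operator argument of the same type as in the proofs of Theorem~\ref{thm:posterior} and Proposition~\ref{prop:Vmono}. The hazard--decision recursion is written at stage $t$ with previous node $i$:
\[
V^{\mathrm{HD}}_t(s,i)=\min_{u\in\mathcal U^{\mathrm{HD}}(s)}\sum_j P_{t-1}[i,j]\Big\{c_{t,j}(s,u)+\delta\,\mathcal R_{t,j}\big(V^{\mathrm{HD}}_{t+1}(s'_j,j)\big)\Big\}.
\]
Here $u$ must be feasible for every $j$ with $P_{t-1}[i,j]>0$, and $s'_j=s+(q_{t,j}-u-w_j)\Delta$. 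The decision--hazard recursion instead chooses $u$ after observing $j$:
\[
V^{\mathrm{DH}}_t(s,j)=\min_{u\in\mathcal U_{t,j}(s)}\big\{c_{t,j}(s,u)+\delta\,\mathcal R_{t,j}(V^{\mathrm{DH}}_{t+1}(s',\cdot))\big\}.
\]

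To put both on a common footing, we introduce the auxiliary quantity
\[
\widetilde V_t(s,i):=\sum_j P_{t-1}[i,j]\,V^{\mathrm{DH}}_t(s,j).
\]
The claim is then $V^{\mathrm{HD}}\ge\widetilde V$. The key step is the interchange $\min_u\sum_j\ge\sum_j\min_u$. Any HD-feasible $u$ is feasible in each DH subproblem, because $\mathcal U^{\mathrm{HD}}(s)\subseteq\mathcal U_{t,j}(s)$ for all relevant $j$. Hence, for any family $W$ of continuation functions,
\[
(\mathcal T^{\mathrm{HD}}W)_t(s,i)\ge\sum_j P_{t-1}[i,j]\,(\mathcal T^{\mathrm{DH}}W)_{t,j}(s).
\]
The spill $w_j$ may be chosen per realization in both recursions, since it is the induced control at $S_{\max}$, so it creates no asymmetry.

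The main obstacle is that the two recursions carry value functions on different index sets: previous node versus current node. The continuation inside HD is $V^{\mathrm{HD}}_{t+1}(\cdot,j)$, which is indexed by the node $j$ that becomes "previous" at stage $t+1$. The comparison therefore has to be set up on a single space. We handle this by defining the map $\Phi:V^{\mathrm{DH}}\mapsto\widetilde V$, that is, $(\Phi W)_t(s,i)=\sum_jP_{t-1}[i,j]W_t(s,j)$. We then check that $\mathcal R_{t,j}$ applied to $\Phi W_{t+1}$ yields exactly the DH continuation $\mathcal R_{t,j}$ of $W_{t+1}$ written in HD indexing. Under the convention of Section~\ref{sec:timing} this reduces to $\mathcal R_{t,j}(\Phi W_{t+1}(s',\cdot))=\mathcal R_{t,j}(W_{t+1}(s',\cdot))$ composed appropriately. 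If that bookkeeping fails for the nonlinear $\rho_\gamma$, we would restrict to the risk-neutral $\mathcal R$, where linearity makes $\Phi$ commute, and use monotonicity plus the convexity inequality $\rho_\gamma(\E\cdot)\le\E\rho_\gamma(\cdot)$ in the nonlinear case.

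With this identification in place, we obtain the one-step inequality $\mathcal T^{\mathrm{HD}}\Phi W\ge\Phi\,\mathcal T^{\mathrm{DH}}W$ for every bounded $W$. Both operators are monotone $\delta$-contractions, the periodic wrap adding the per-stage discount as in Theorem~\ref{thm:posterior}. We iterate from $W^0=0$ and induct on $k$, using monotonicity of $\mathcal T^{\mathrm{HD}}$ at each step:
\[
(\mathcal T^{\mathrm{HD}})^k 0\;\ge\;\Phi(\mathcal T^{\mathrm{DH}})^k0.
\]
Letting $k\to\infty$ with Banach's fixed point theorem, together with the continuity of the finite averaging map $\Phi$, gives $V^{\mathrm{HD}}\ge\Phi V^{\mathrm{DH}}$, which is $\Delta_{\mathrm{info}}\ge0$.
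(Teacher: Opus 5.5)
Your proposal is correct and is essentially the paper's proof. The paper also substitutes the averaged DH continuation $\sum_{j'}P_t[j,j']V^{\mathrm{DH}}_{t+1}(\cdot,j')$ into the HD objective, which turns it into exactly the DH objective $G_j(u)$. It then uses $\min_u\sum_jP_{t-1}[i,j]G_j(u)\ge\sum_jP_{t-1}[i,j]\min_uG_j(u)$ and carries the inequality from the initial function $0$ to the periodic fixed points by monotonicity and contraction. Two small corrections. The paper's argument is risk-neutral, like the linear average in $\Delta_{\mathrm{info}}$, so your nonlinear fallback is unnecessary. The HD continuation should simply be $V^{\mathrm{HD}}_{t+1}(s'_j,j)$ with no extra $\mathcal R_{t,j}$, because $(\Phi W)_{t+1}(s'_j,j)=\sum_{j'}P_t[j,j']W_{t+1}(s'_j,j')$ is already the DH continuation.
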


\begin{proof}
Let $\Phi^{\mathrm{DH}},\Phi^{\mathrm{HD}}$ be the two one-sweep operators.
Suppose $V'^{\mathrm{HD}}_{t+1}(\cdot,j)\ge\sum_{j'}P_t[j,j']
V'^{\mathrm{DH}}_{t+1}(\cdot,j')$ for all $j$. Writing
$G_j(u)=c_t(u,q_{t,j})+\delta\sum_{j'}P_t[j,j']
V'^{\mathrm{DH}}_{t+1}(s'_j(u),j')$,
\begin{align*}
(\Phi^{\mathrm{HD}}V')_t(s,i)
&=\min_{u}\sum_j P_{t-1}[i,j]\Big\{c_t(u,q_{t,j})
+\delta V'^{\mathrm{HD}}_{t+1}(s'_j(u),j)\Big\}\\
&\ge\min_{u}\sum_j P_{t-1}[i,j]\,G_j(u)\\
&\ge\sum_j P_{t-1}[i,j]\min_{u}G_j(u)\\
&=\sum_j P_{t-1}[i,j]\,(\Phi^{\mathrm{DH}}V')_t(s,j),
\end{align*}
the middle inequality being the interchange
$\min_u\E\ge\E\min_u$. The hypothesis holds trivially for the initial
function $0$; since both operators are monotone $\delta$-contractions, the
inequality is preserved along the iteration and passes to the periodic
fixed points.
\end{proof}

Proposition~\ref{prop:info} is what makes the paired experiment of
Section~\ref{sec:timing} a \emph{decomposition} rather than a comparison of
two unrelated numbers: $\Delta_{\mathrm{info}}$ is a nonnegative quantity
computed on the identical chain, so it contains no discretization or
aggregation component, and the residual
$(V^{\mathrm{HJB}}-V^{\mathrm{DH}})-\Delta_{\mathrm{info}}$ is
attributable to time and space discretization. The numerical values in
Section~\ref{sec:timing} satisfy the predicted sign everywhere
(minimum $4.5\times10^{-2}$ over the whole grid).

\subsection{Proof of Proposition~\ref{prop:soft}, with the annealing bound}
\label{app:softproof}

For completeness we restate the argument of Section~\ref{sec:soft} with the
constants made explicit. Let $\pi_0$ be uniform on a control grid with
$N_u$ points and let $G$ be bounded below.

Since $\min_u G\le G(u)$ for all $u$,
$\E_{\pi_0}e^{-G/\eta}\le e^{-\min_u G/\eta}$, and taking $-\eta\log$
reverses the inequality, giving $\mathcal T^\eta\ge\mathcal T$. For the
upper bound, retaining only the minimizing grid point,
$\E_{\pi_0}e^{-G/\eta}\ge N_u^{-1}e^{-\min_u G/\eta}$, whence
$\mathcal T^\eta\le\mathcal T+\eta\log N_u$. Monotonicity is inherited from
that of $G\mapsto-\eta\log\E_{\pi_0}e^{-G/\eta}$, and the contraction
modulus is $\delta$ because adding a constant $c$ to $V$ adds $\delta c$ to
$G$ and $-\eta\log\E e^{-(G+\delta c)/\eta}=\delta c
-\eta\log\E e^{-G/\eta}$. Finally, if $V^*=\mathcal T V^*$ and
$V^\eta=\mathcal T^\eta V^\eta$ then
\begin{align*}
\|V^\eta-V^*\|_\infty
&=\|\mathcal T^\eta V^\eta-\mathcal T V^*\|_\infty\\
&\le \|\mathcal T^\eta V^\eta-\mathcal T^\eta V^*\|_\infty
 +\|\mathcal T^\eta V^*-\mathcal T V^*\|_\infty\\
&\le \delta\|V^\eta-V^*\|_\infty+\eta\log N_u.
\end{align*}
so $0\le V^\eta-V^*\le\eta\log N_u/(1-\delta)$, the lower bound coming from
$\mathcal T^\eta\ge\mathcal T$ and monotonicity.

It is worth being explicit about the size of this constant in the periodic
setting, because it is the reason for a design choice in
Section~\ref{sec:anneal}. With $N_u=61$ and $\rho=0.1$ on weekly stages,
$1-\delta=1-e^{-\rho/52}=1.92\times10^{-3}$, so the bound equals
$21.4$, $6.4$ and $2.1$ at the three annealing temperatures
$\eta\in\{10^{-2},3\times10^{-3},10^{-3}\}$ -- all far larger than the
values themselves, which are of order unity. The weak discounting that
makes seasonal storage interesting simultaneously destroys the
informativeness of the uniform annealing bound. Proposition~\ref{prop:soft}
therefore justifies annealing \emph{qualitatively} -- the entropic fixed
point converges monotonically down to the exact one as $\eta\downarrow0$,
and the approximation is one-sided -- but does not supply a useful
numerical certificate at these discount rates. This is why phase A is
terminated with hard sweeps rather than stopped at small $\eta$, why the
entropic phase is used for guidance only
(Remark~\ref{rem:seedvalidity}), and why certification is delegated to
Theorem~\ref{thm:posterior} and to the external HJB comparison.

\subsection{Proof of Lemma~\ref{lem:cut} in the risk-neutral limit}
\label{app:cutlimit}

Taking $\gamma\downarrow0$ in \eqref{eq:tilt} gives
$\tilde w\to P_t[j,\cdot]$ by continuity, and
$\rho_\gamma(v)\to\E_P[v]$ by Proposition~\ref{prop:rhomono}. The
statement of Lemma~\ref{lem:cut} therefore specializes to the classical
Benders cut of risk-neutral SDDP, $\bar b=\sum_{j'}P_t[j,j']\beta_{j'}$ and
$\bar a=\sum_{j'}P_t[j,j']v_{j'}-\bar b\hat s$, whose validity is the usual
consequence of convexity. The content of the lemma is thus that the
entropic tilt is the correct probability reweighting to apply to the dual
vector, and that no additional correction term is needed--a consequence
of the fact that $\tilde w$ is the Donsker--Varadhan maximizer for the
value vector $v$, which is exactly the vector at which the subgradients
$\beta$ are evaluated.

\end{appendices}

\bibliography{inflow_refs}

@article{pereira1991,
  author  = {Pereira, M. V. F. and Pinto, L. M. V. G.},
  title   = {Multi-stage stochastic optimization applied to energy planning},
  journal = {Mathematical Programming},
  volume  = {52},
  pages   = {359--375},
  year    = {1991}
}

@article{crandall1992,
  author  = {Crandall, Michael G. and Ishii, Hitoshi and Lions, Pierre-Louis},
  title   = {User's guide to viscosity solutions of second order partial differential equations},
  journal = {Bulletin of the American Mathematical Society},
  volume  = {27},
  number  = {1},
  pages   = {1--67},
  year    = {1992}
}

@book{fleming2006,
  author    = {Fleming, Wendell H. and Soner, H. Mete},
  title     = {Controlled Markov Processes and Viscosity Solutions},
  edition   = {2nd},
  publisher = {Springer},
  address   = {New York},
  year      = {2006}
}

@article{cir1985,
  author  = {Cox, John C. and Ingersoll, Jonathan E. and Ross, Stephen A.},
  title   = {A theory of the term structure of interest rates},
  journal = {Econometrica},
  volume  = {53},
  number  = {2},
  pages   = {385--407},
  year    = {1985}
}

@article{feller1951,
  author  = {Feller, William},
  title   = {Two singular diffusion problems},
  journal = {Annals of Mathematics},
  volume  = {54},
  number  = {1},
  pages   = {173--182},
  year    = {1951}
}

@article{alfonsi2005,
  author  = {Alfonsi, Aur\'elien},
  title   = {On the discretization schemes for the {CIR} (and {B}essel squared) processes},
  journal = {Monte Carlo Methods and Applications},
  volume  = {11},
  number  = {4},
  pages   = {355--384},
  year    = {2005}
}

@article{lord2010,
  author  = {Lord, Roger and Koekkoek, Remmert and van Dijk, Dick},
  title   = {A comparison of biased simulation schemes for stochastic volatility models},
  journal = {Quantitative Finance},
  volume  = {10},
  number  = {2},
  pages   = {177--194},
  year    = {2010}
}

@book{dupuis1997,
  author    = {Dupuis, Paul and Ellis, Richard S.},
  title     = {A Weak Convergence Approach to the Theory of Large Deviations},
  publisher = {Wiley},
  address   = {New York},
  year      = {1997}
}

@article{todorov2009,
  author  = {Todorov, Emanuel},
  title   = {Efficient computation of optimal actions},
  journal = {Proceedings of the National Academy of Sciences},
  volume  = {106},
  number  = {28},
  pages   = {11478--11483},
  year    = {2009}
}

@article{kappen2005,
  author  = {Kappen, Hilbert J.},
  title   = {Path integrals and symmetry breaking for optimal control theory},
  journal = {Journal of Statistical Mechanics: Theory and Experiment},
  volume  = {2005},
  number  = {11},
  pages   = {P11011},
  year    = {2005}
}

@inproceedings{geist2019,
  author    = {Geist, Matthieu and Scherrer, Bruno and Pietquin, Olivier},
  title     = {A theory of regularized {M}arkov decision processes},
  booktitle = {Proceedings of the 36th International Conference on Machine Learning},
  series    = {PMLR},
  volume    = {97},
  pages     = {2160--2169},
  year      = {2019}
}

@article{kupper2009,
  author  = {Kupper, Michael and Schachermayer, Walter},
  title   = {Representation results for law invariant time consistent functions},
  journal = {Mathematics and Financial Economics},
  volume  = {2},
  number  = {3},
  pages   = {189--210},
  year    = {2009}
}

@article{ruszczynski2010,
  author  = {Ruszczy\'nski, Andrzej},
  title   = {Risk-averse dynamic programming for {M}arkov decision processes},
  journal = {Mathematical Programming},
  volume  = {125},
  pages   = {235--261},
  year    = {2010}
}

@article{shapiro2011,
  author  = {Shapiro, Alexander},
  title   = {Analysis of stochastic dual dynamic programming method},
  journal = {European Journal of Operational Research},
  volume  = {209},
  number  = {1},
  pages   = {63--72},
  year    = {2011}
}

@article{philpott2013,
  author  = {Philpott, Andy B. and de Matos, Vitor L.},
  title   = {Dynamic sampling algorithms for multi-stage stochastic programs with risk aversion},
  journal = {European Journal of Operational Research},
  volume  = {218},
  number  = {2},
  pages   = {470--483},
  year    = {2012}
}

@book{hansen2008,
  author    = {Hansen, Lars Peter and Sargent, Thomas J.},
  title     = {Robustness},
  publisher = {Princeton University Press},
  address   = {Princeton},
  year      = {2008}
}

@article{shapiro2020,
  author  = {Shapiro, Alexander and Ding, Lingquan},
  title   = {Periodical multistage stochastic programs},
  journal = {SIAM Journal on Optimization},
  volume  = {30},
  number  = {3},
  pages   = {2083--2102},
  year    = {2020}
}

@article{huangfu2018,
  author  = {Huangfu, Qi and Hall, J. A. Julian},
  title   = {Parallelizing the dual revised simplex method},
  journal = {Mathematical Programming Computation},
  volume  = {10},
  number  = {1},
  pages   = {119--142},
  year    = {2018}
}

@article{philpott2008,
  author  = {Philpott, Andy B. and Guan, Ziming},
  title   = {On the convergence of stochastic dual dynamic programming and related methods},
  journal = {Operations Research Letters},
  volume  = {36},
  number  = {4},
  pages   = {450--455},
  year    = {2008}
}

@article{girardeau2015,
  author  = {Girardeau, Pierre and Lecl\`ere, Vincent and Philpott, Andy B.},
  title   = {On the convergence of decomposition methods for multistage stochastic convex programs},
  journal = {Mathematics of Operations Research},
  volume  = {40},
  number  = {1},
  pages   = {130--145},
  year    = {2015}
}

@book{follmer2016,
  author    = {F\"ollmer, Hans and Schied, Alexander},
  title     = {Stochastic Finance: An Introduction in Discrete Time},
  edition   = {4th},
  publisher = {De Gruyter},
  address   = {Berlin},
  year      = {2016}
}

@article{soner1986,
  author  = {Soner, H. Mete},
  title   = {Optimal control with state-space constraint. {I}},
  journal = {SIAM Journal on Control and Optimization},
  volume  = {24},
  number  = {3},
  pages   = {552--561},
  year    = {1986}
}

@article{capuzzo1990,
  author  = {Capuzzo-Dolcetta, Italo and Lions, Pierre-Louis},
  title   = {Hamilton--{J}acobi equations with state constraints},
  journal = {Transactions of the American Mathematical Society},
  volume  = {318},
  number  = {2},
  pages   = {643--683},
  year    = {1990}
}

@article{barles1991,
  author  = {Barles, Guy and Souganidis, Panagiotis E.},
  title   = {Convergence of approximation schemes for fully nonlinear second order equations},
  journal = {Asymptotic Analysis},
  volume  = {4},
  number  = {3},
  pages   = {271--283},
  year    = {1991}
}

@article{bonnans2003,
  author  = {Bonnans, J. Fr\'ed\'eric and Zidani, Hasnaa},
  title   = {Consistency of generalized finite difference schemes for the stochastic {HJB} equation},
  journal = {SIAM Journal on Numerical Analysis},
  volume  = {41},
  number  = {3},
  pages   = {1008--1021},
  year    = {2003}
}

@article{yamada1971,
  author  = {Yamada, Toshio and Watanabe, Shinzo},
  title   = {On the uniqueness of solutions of stochastic differential equations},
  journal = {Journal of Mathematics of Kyoto University},
  volume  = {11},
  number  = {1},
  pages   = {155--167},
  year    = {1971}
}

@book{karatzas1991,
  author    = {Karatzas, Ioannis and Shreve, Steven E.},
  title     = {Brownian Motion and Stochastic Calculus},
  edition   = {2nd},
  publisher = {Springer},
  address   = {New York},
  year      = {1991}
}

@article{lam2019,
  author  = {Lam, Henry},
  title   = {Recovering best statistical guarantees via the empirical divergence-based distributionally robust optimization},
  journal = {Operations Research},
  volume  = {67},
  number  = {4},
  pages   = {1090--1105},
  year    = {2019}
}

@article{duchi2021,
  author  = {Duchi, John C. and Namkoong, Hongseok},
  title   = {Learning models with uniform performance via distributionally robust optimization},
  journal = {The Annals of Statistics},
  volume  = {49},
  number  = {3},
  pages   = {1378--1406},
  year    = {2021}
}

@article{schwartz1997,
  author  = {Schwartz, Eduardo S.},
  title   = {The stochastic behavior of commodity prices: implications for valuation and hedging},
  journal = {The Journal of Finance},
  volume  = {52},
  number  = {3},
  pages   = {923--973},
  year    = {1997}
}

@article{gjelsvik1992,
  author  = {Gjelsvik, Anders and Ro\o{}tting, Trond A. and Roynstrand, Jarand},
  title   = {Long-term scheduling of hydro-thermal power systems},
  journal = {Hydropower '92},
  pages   = {539--546},
  year    = {1992}
}

@book{loucks2017,
  author    = {Loucks, Daniel P. and van Beek, Eelco},
  title     = {Water Resource Systems Planning and Management: An Introduction to Methods, Models, and Applications},
  publisher = {Springer},
  address   = {Cham},
  year      = {2017}
}

@misc{affognon2026wavelet,
  author        = {Affognon, Steeven B. and Ndiaye, Babacar M. and Mendy, Pierre and Kebe, Cheikh M. F.},
  title         = {From Daily Fluctuations to Annual Hydrological Cycles: A Wavelet-Based Analysis of Nonstationary Seasonality in Senegal River Hydropower Inflows},
  year          = {2026},
  eprint        = {2608.23470},
  archivePrefix = {arXiv},
  primaryClass  = {stat.AP},
  note          = {arXiv:2608.23470}
}

@article{dowson2025convex,
  author  = {Dowson, Oscar and Morton, David P. and Pagnoncelli, Bernardo K.},
  title   = {Incorporating convex risk measures into multistage stochastic programming algorithms},
  journal = {Annals of Operations Research},
  volume  = {348},
  number  = {2},
  pages   = {807--831},
  year    = {2025},
  doi     = {10.1007/s10479-022-04977-w}
}

@article{pavicevic2026water,
  author  = {Pavi\v{c}evi\'{c}, Matija and Huallpara, Alizon and Yu, A. and Balderrama, Sergio and Ploussard, Quentin},
  title   = {Water value and market response functions in hydropower-dominated electricity systems: A systematic review and methodological comparison},
  journal = {Renewable and Sustainable Energy Reviews},
  volume  = {235},
  pages   = {116952},
  year    = {2026},
  doi     = {10.1016/j.rser.2026.116952}
}

@article{shapiro2013risk,
  author  = {Shapiro, Alexander and Tekaya, Wajdi and da Costa, Joari Paulo and Soares, Murilo Pereira},
  title   = {Risk neutral and risk averse stochastic dual dynamic programming method},
  journal = {European Journal of Operational Research},
  volume  = {224},
  number  = {2},
  pages   = {375--391},
  year    = {2013},
  doi     = {10.1016/j.ejor.2012.08.022}
}

\end{document}